\documentclass[12pt]{amsart}
 \usepackage{amsmath,amsfonts,amsthm,amscd,textcomp,times, }
  \ifx\pdfoutput\undefined
   \usepackage[dvips]{graphicx}
   \else
   \usepackage[pdftex]{graphicx}
   \usepackage[all,cmtip]{xy}
\usepackage[T1]{fontenc}
\usepackage{calligra}
\usepackage{multido}
\usepackage{frcursive}
\usepackage{mathrsfs}
\usepackage{tikz}
   \pdfcompresslevel=9
   \fi
  \usepackage{epstopdf}
\usepackage{cite}
 \newtheorem{theorem}{Theorem}

\newtheorem{proposition}{Proposition}
\newtheorem{lemma}{Lemma}

\newtheorem{corollary}{Corollary}
\newtheorem{remark}{Remark}

\setlength{\oddsidemargin}{0.25in}
\setlength{\evensidemargin}{0.25in}
\setlength{\textwidth}{6in}
\setlength{\topmargin}{-.5in}
\setlength{\textheight}{9in}
\numberwithin{equation}{section}
\numberwithin{theorem}{section}
\numberwithin{proposition}{section}
\numberwithin{lemma}{section}
\numberwithin{claim}{section}
\numberwithin{corollary}{section}

\newcommand{\bull}{\ensuremath{{}\bullet{}}}
 \newcommand{\gr}{\ensuremath{\mathbb{G}(N-n-1, \cpn)}}
\newcommand{\cpn}{\ensuremath{\mathbb{P}^{N}}}
\newcommand{\slnc}{\ensuremath{SL(N+1,\mathbb{C})}}
\newcommand{\dlb}{\ensuremath{\overline{\partial}}}
\newcommand{\dl}{\ensuremath{\partial}} 
\newcommand{\ra}{\ensuremath{\longrightarrow}}
\newcommand{\om}{\ensuremath{\omega}}

\newcommand{\xhyp}{\ensuremath{X\times\mathbb{P}^{n-1}}}
 
 \begin{document}
  \DeclareGraphicsExtensions{.pdf,.gif,.jpg}
\title[K-Stability]{Fourier-Mukai Transforms, Euler-Green Currents, and K-Stability }
\author{Sean Timothy Paul and Kyriakos Sergiou}
\email{stpaul@wisc.edu}
\email{sergiou@wisc.edu}
\address{Mathematics Department at the University of Wisconsin, Madison}
\subjclass[2000]{53C55}
\keywords{Discriminants, Resultants, K-energy maps, Bott-Chern Forms, csc K\"ahler metrics, K-stability  .}
\date{April 30, 2019} 
 \vspace{-5mm}  
\begin{abstract} 
We provide an analog  of the Hilbert-Chow morphism for generalized discriminants. As an application we recover the Main Lemma from \cite{paul2012} as well as Theorem 0.1 from \cite{dingtian}. 
 Our work also exhibits a wide range of energy functionals in K\"ahler geometry as Fourier-Mukai transforms. Consequently these energies are completely determined by dual type varieties and therefore have logarithmic singularities when restricted to the space of algebraic potentials. 
 This work was inspired by the many ideas of Gang Tian concerning canonical K\"ahler metrics and the stability of projective algebraic varieties .
\end{abstract}
\maketitle 
\tableofcontents 
 \newpage
\section{\ \ Introduction and Statement of Results}
Let $\mathbb{X}\xrightarrow{\pi} B$ be a flat (relative dimension $n$) family of smooth polarized, linearly normal, complex subvarieties of some fixed $\mathbb{P}^N$ parametrized by a reduced and irreducible (quasi) projective base $B$. We do not assume that $B$ is smooth.  
 Let $Q$ be a locally free sheaf of rank $n+1$ over $\mathbb{X}$.
We assume that \
\ \\
\begin{align*}
 &(**) \mbox{ There is a finite dimensional subspace  $ W\subset \Gamma(\mathbb{X}\ ,\ Q)$ generating each fiber .}\  (**)  \  \\
\end{align*}
This is equivalent to an  exact sequence of vector bundles over $\mathbb{X}$ 
\begin{align*}
0\ra\mathcal{S}\ra \mathbb{X}\times W\ra Q\ra 0 \  
\end{align*}
where the sub bundle  $\mathcal{S}$ is the kernel of the evaluation map $\mathbb{X}\times W\ra Q$ .
 
 This defines 
\begin{align*}
&\bull  \mathcal{E}:=  Q\otimes  \mathcal{O}_{\mathbb{P}(W)}(1)\\
\ \\
&\bull  \Psi \in \Gamma(\mathbb{X}\times \mathbb{P}(W), Q\otimes\mathcal{O}_{\mathbb{P}(W)}(1)) \quad \mbox{ (the evaluation section)}\\
 \ \\
&\bull   I :=\mathbb{P}(\mathcal{S}) =\left( \Psi= 0\right) \subset \mathbb{X}\times \mathbb{P}(W)  \\
\ \\
& \bull Z:= \pi(I ) 
\end{align*}
By abuse of notation we also denote by $\pi$ the projection  
\begin{align*}
 \pi:\mathbb{X}\times \mathbb{P}(W)\ra B\times \mathbb{P}(W)  
\end{align*}
The situation may be visualized as follows
\begin{align*} 
\xymatrix{&\mathcal{E}\ar[d]\\  
	I \ar@{^{(}->}[r]^-{\iota} \ar[d] &  \mathbb{X}\times\mathbb{P}(W) \ar@/^1pc/[u]^{ {\Psi}} \ar[d]^{\pi}&\\  
	{Z}\ar@{^{(}->}[r]\ar[d]&B\times \mathbb{P}(W)  \ar[d]\\
	B\ar[r]&B }
\end{align*}
Observe that the  {codimension} of $I$ in $\mathbb{X}\times \mathbb{P}(W)$ is equal to $n+1$.

  The \textbf{basic assumption} in this paper is that $I\overset{\pi} \ra Z$ is a \emph{simultaneous resolution of singularities}.
This means:
\begin{align*}
&i)\ \mbox{ $I\overset{\pi}\ra Z$ is birational .}\\
\ \\
&ii)\ \mbox{For each $b\in B$}\\
&\mbox{$I_b:=\mathbb{P}(\mathcal{S}|_{\mathbb{X}_b})\overset{\pi}\ra  Z_b:=Z\cap(\{b\}\times\mathbb{P}(W))$ is also birational.}
\end{align*}
Our basic assumption implies that the codimension of $Z_b$ in $\mathbb{P}(W)$ is one. All of this data will be called a \emph{basic set up} for the family $\mathbb{X}\ra B$ .

Next we observe that the degrees of the divisors $Z_b$ are constant in $b\in B$. This follows at once from the identity
\begin{align*}
\deg(Z_b)=\int_{\mathbb{X}_b}c_n(Q|_{\mathbb{X}_b})>0 \ .
\end{align*}
We will denote this common degree by $\widehat{d}$
\begin{align*}
\deg(Z_b)=\widehat{d} \quad \mbox{for all $b\in B$}\ .
\end{align*}

Therefore, for each $b\in B$ there is a unique (up to scale) irreducible polynomial $f_b$
\begin{align*}
 f_b\in\mathbb{C}_{\widehat{d}}\ [W]\setminus\{0\}
\end{align*}
such that 
\begin{align*}
\left(f_b=0\right)=Z_b\ .
 \end{align*}
We may therefore define a map
\begin{align*}
\Delta:B\ra |\mathcal{O}(\widehat{d})|:=\mathbb{P}({\mathbb{C}}_{\widehat{d}}\ [W]) \quad \Delta(b):=[f_b] \ .
\end{align*}

The main result of this paper is the following
\begin{theorem}\label{hilb-disc}
	$\Delta$ is a morphism of quasi-projective varieties. In particular, $\Delta$ is holomorphic.
\end{theorem}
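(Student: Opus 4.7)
The plan is to realize $\Delta$ as the projectivization of a regular, nowhere-vanishing section of a fixed vector bundle on $B$. Concretely, I aim to construct a line bundle $L$ on $B$ and a section $\mathbf{s}$ of the locally free sheaf $L \otimes_{\mathbb{C}} \mathbb{C}_{\widehat{d}}[W]$ with $\mathbf{s}(b) \in L_{b} \otimes \mathbb{C}\cdot f_{b}$ for every $b \in B$. Since $f_{b}$ is nonzero for all $b$, the section $\mathbf{s}$ is everywhere nonvanishing, so the composition
\begin{equation*}
B \xrightarrow{\mathbf{s}} \left(L \otimes \mathbb{C}_{\widehat{d}}[W]\right) \setminus \{0\} \longrightarrow \mathbb{P}(\mathbb{C}_{\widehat{d}}[W])
\end{equation*}
is a morphism of quasi-projective varieties, and it coincides with $\Delta$.

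To produce $(L, \mathbf{s})$, I would combine the Koszul resolution with the Knudsen-Mumford determinant. Since $\mathrm{rank}(\mathcal{E}) = n+1$ equals the codimension of $I = (\Psi = 0)$, the Koszul complex
\begin{equation*}
K^{\bullet}: \quad 0 \to \Lambda^{n+1}\mathcal{E}^{*} \to \cdots \to \Lambda^{2}\mathcal{E}^{*} \to \mathcal{E}^{*} \to \mathcal{O}_{\mathbb{X} \times \mathbb{P}(W)} \to 0
\end{equation*}
is a locally free resolution of $\iota_{*}\mathcal{O}_{I}$. The projection $\pi : \mathbb{X} \times \mathbb{P}(W) \to B \times \mathbb{P}(W)$ is proper and flat (because $\mathbb{X} \to B$ is), so $R\pi_{*}K^{\bullet}$ is a perfect complex on $B \times \mathbb{P}(W)$ whose support is the codimension-one subvariety $Z$. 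The Cayley/Knudsen-Mumford determinant of this complex then yields a line bundle $\mathcal{L}$ on $B \times \mathbb{P}(W)$ together with a canonical section whose zero scheme is $Z$, with multiplicity one thanks to the birationality of $I \to Z$.

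It remains to identify $\mathcal{L}$. I would restrict the entire construction to each fiber $\{b\} \times \mathbb{P}(W)$: since $\pi$ is flat and the Koszul complex is compatible with arbitrary base change, the restriction recovers the classical Cayley form of $\mathbb{X}_{b} \subset \mathbb{P}(W)$, which is a divisor of degree $\widehat{d}$. Hence $\mathcal{L}|_{\{b\} \times \mathbb{P}(W)} \cong \mathcal{O}_{\mathbb{P}(W)}(\widehat{d})$ for every $b$. Applying the seesaw principle to the proper flat map $p_{B} : B \times \mathbb{P}(W) \to B$ with geometrically connected fibers, one obtains $\mathcal{L} \otimes p_{W}^{*}\mathcal{O}_{\mathbb{P}(W)}(-\widehat{d}) \cong p_{B}^{*}L$ for a unique line bundle $L$ on $B$, so that $\mathcal{L} \cong p_{B}^{*}L \otimes p_{W}^{*}\mathcal{O}_{\mathbb{P}(W)}(\widehat{d})$. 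By the projection formula, $(p_{B})_{*}\mathcal{L} \cong L \otimes_{\mathbb{C}} H^{0}(\mathbb{P}(W), \mathcal{O}(\widehat{d})) = L \otimes \mathbb{C}_{\widehat{d}}[W]$, and the canonical section of $\mathcal{L}$ pushes forward to the desired section $\mathbf{s}$.

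The main technical obstacle is the Knudsen-Mumford step: verifying that the determinant of $R\pi_{*}K^{\bullet}$ carries a canonical section whose zero divisor is exactly $Z$ with multiplicity one. This requires careful bookkeeping of the alternating sum of lengths of the cohomology sheaves of $R\pi_{*}\mathcal{O}_{I}$ along $Z$, and it is here that the simultaneous-resolution hypothesis is essential: birationality forces $R^{0}\pi_{*}\mathcal{O}_{I} \cong \mathcal{O}_{Z}$ at the generic point of $Z$, while the higher direct images vanish generically by base change to the generic point of $Z$. A secondary issue is the seesaw identification when $B$ is only reduced and irreducible; but the standard argument still applies because $p_{B}$ is proper and flat with geometrically connected fibers.
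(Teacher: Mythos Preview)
Your proposal is correct and follows essentially the same strategy as the paper: resolve $\mathcal{O}_I$ by the Koszul complex of $(\mathcal{E}, \Psi)$, push forward to $B \times \mathbb{P}(W)$, take the determinant in the sense of Knudsen--Mumford, identify the resulting line bundle as $p_1^*L \otimes p_2^*\mathcal{O}(\widehat{d})$, and push the canonical section down to $B$.

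The differences are purely in execution. Where you invoke the abstract Knudsen--Mumford Div construction on the perfect complex $R\pi_*K^\bullet$ and then appeal to seesaw to identify $\mathcal{L}$, the paper first twists by $p^*\mathcal{O}_{\mathbb{X}}(m)$ for $m\gg 0$ so that the termwise higher direct images vanish; then $\pi_*(\Lambda^\bullet \mathcal{E}^\vee \otimes p^*\mathcal{O}_{\mathbb{X}}(m))$ is already a bounded complex of locally free sheaves, exact off $Z$, and its \emph{torsion} furnishes the section explicitly. The paper then identifies the determinant line directly via the projection formula and a Hirzebruch--Riemann--Roch computation showing the twist on the $\mathbb{P}(W)$ factor equals $\widehat{d}$, rather than by seesaw; and it certifies multiplicity one along each $Z_b$ from the degree equality $\deg(\Delta|_{\{b\}\times\mathbb{P}(W)})=\widehat{d}=\deg(Z_b)$, rather than by analyzing $R^i\pi_*\mathcal{O}_I$ at the generic point as you do. Your route is slightly more conceptual; the paper's twist-and-torsion device is more hands-on and makes the section an explicit Cayley form. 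Both reach the same conclusion.
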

\subsection{Hermitean metrics and Base Change}
Now we introduce Hermitean metrics on everything via  ``base change''. Let $S$ be a smooth complex algebraic variety together with a morphism to the base of our family $\mathbb{X}\overset{\pi}\ra B$
\begin{align*}
S\ra B\ .
\end{align*}

The basic set up over $B$ may be pulled back to a basic set up over $S$. The corresponding morphism will still be denoted by $\Delta$. Our first assumption on $S$ is that 
\begin{align*}
\Delta^*\mathcal{O}(1)\cong\mathcal{O}_S\qquad \mathcal{O}(1):=\mbox{the hyperplane over $|\mathcal{O}(\widehat{d})|$} \ .
\end{align*}

Equivalently we may ``lift'' the map $\Delta $ the the cone over $|\mathcal{O}(\widehat{d})|$
\begin{align*}
 \xymatrix{& {\mathbb{C}}_{\widehat{d}}\ [W]\setminus\{0\}\ar[d]\\
 S\ar[ru] \ar[r]^\Delta&|\mathcal{O}(\widehat{d})|
}
\end{align*}
By abuse of notation the lifted map will also be denoted by $\Delta$. Next we fix a positive definite Hermitean form $<\cdot\ ,\ \cdot>$ on $W$. We let $\mathcal{H}^{+}(W, <\cdot\ ,\ \cdot>)$ denote self adjoint positive linear maps on $W$. This parametrises all postitive Hermitian forms on $W$. Let $h$ be a smooth map
\begin{align*}
h:S\ra \mathcal{H}^{+}(W, <\cdot\ ,\ \cdot>)\ .
\end{align*}
Using this map we define a ``dynamic'' metric $H_S$ on the trivial bundle $\mathbb{X}_S\times W$
\begin{align*}
H_S(p)<w_1,w_2>:=<h(\pi(p))w_1,w_2>\ .
\end{align*}
Similarly we have the ``static'' metric 
\begin{align*}
H_0(p)<w_1,w_2>:=< w_1,w_2>\ .
\end{align*}
These metrics descend to metrics $H^Q_S$ and $H^Q_0$ on $Q$. 
The Hermitean inner product on $W$ induces a Fubini Study metric $h^m_{FS}$ on $\mathcal{O}_{\mathbb{P}(W)}(m)$ for any $m\in \mathbb{Z}$ as well as a K\"ahler form $\om_{FS}$ on $\mathbb{P}(W)$ . This will be fixed throughout the paper. 
Tensoring $H^Q_S$ and $H^Q_0 $ with $h_{FS}$ gives two metrics $H^{\mathcal{E}}_S$ and $H^{\mathcal{E}}_0$on $\mathcal{E}$.

We need the following result from \cite{bgs2007}.
\begin{theorem}  {\emph{(Bismut, Gillet, Soul\'e)}\newline  Let $\mathcal{E}\overset{\pi}{\ra}M$ be a rank $r$ holomorphic vector bundle over a complex manifold $M$. Given any Hermitean metric $H^{\mathcal{E}}$ on $\mathcal{E}$  
	there exists a current $\hat {\mathbf{e}}(H^{\mathcal{E}})\in \mathcal{D}^{'}(\mathcal{E})$  whose wave front set is included in ${\mathcal{E}_{\mathbb{R}}^{*}}  $ and which satisfies the following equation of currents on }$\mathcal{E}$
	\begin{align*}
	\frac{\sqrt{-1}}{2\pi} \dl\dlb\hat {\mathbf{e}}(H^{\mathcal{E}})=\delta_{M}- \pi^{*}{\mathbf{e}}(H^{\mathcal{E}})\ .
	\end{align*}
 {The current $\hat {\mathbf{e}}(H^{\mathcal{E}})$ can be pulled back to $M$ by any section $ {\mathbf{s}}$ and satisfies}
	\begin{align*}
	\frac{\sqrt{-1}}{2\pi} \dl\dlb{\mathbf{s}}^{*}\hat {\mathbf{e}}(H^{\mathcal{E}})=\delta_{Z(\mathbf{s})}-{\mathbf{e}}(H^{\mathcal{E}}) \ 
	\end{align*}
	 {where $\mathbf{e}(H^{\mathcal{E}})\in C^{\infty}(\Lambda^{r,r}_M)$ is the Euler form of $\mathcal{E}$ in Chern-Weil theory. Moreover, given two Hermitean metrics $H_0^{\mathcal{E}}$ and $H_1^{\mathcal{E}}$
the difference of the corresponding currents is smooth and up to $\dl$ and $\dlb$ terms is given by}
\begin{align*}
{\mathbf{s}}^{*}\hat {\mathbf{e}}(H_0^{\mathcal{E}})-{\mathbf{s}}^{*}\hat {\mathbf{e}}(H_1^{\mathcal{E}})=- {\mathbf{e}}(H_{1}^{\mathcal{E}}, H_{0}^{\mathcal{E}})\ 
\end{align*}
 {where ${\mathbf{e}}(H_{1}^{\mathcal{E}} , H_{0}^{\mathcal{E}})$ denotes the Bott-Chern double transgression of the Euler form with respect the given metrics.} 
\end{theorem}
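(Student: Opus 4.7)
The plan is to follow the Mathai--Quillen--Bismut approach to Thom forms and their transgressions. First I would fix the Hermitian metric $H^{\mathcal{E}}$ and use it to construct a smooth family of closed forms $\alpha_t$ on the total space of $\mathcal{E}$ (indexed by $t\in (0,\infty)$), modelled on the Mathai--Quillen Gaussian representative of the Thom class. The key properties of this family are that $\alpha_t$ always represents the Thom class, that as $t\to \infty$ it concentrates on the zero section $M\subset \mathcal{E}$ as the current of integration $\delta_M$, and that as $t\to 0$ it converges to $\pi^{*}\mathbf{e}(H^{\mathcal{E}})$.

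Next I would exploit that the cohomology class of $\alpha_t$ is $t$-independent to produce a canonical primitive $\beta_t$ satisfying $d\beta_t = \frac{d}{dt}\alpha_t$; the Mathai--Quillen formulas supply such a $\beta_t$ explicitly. Setting $\hat{\mathbf{e}}(H^{\mathcal{E}}) := -\int_0^\infty \beta_t\, dt$, one reads off the first equation of currents by differentiating under the integral sign and applying the two endpoint limits. The wavefront set statement is then a microlocal check: away from the zero section, the Gaussian factor $e^{-t|v|^2_{H^{\mathcal{E}}}}$ produces rapid decay and hence smoothness of $\hat{\mathbf{e}}(H^{\mathcal{E}})$ in all directions except the conormal of $M$, which gives $\mathrm{WF}(\hat{\mathbf{e}}(H^{\mathcal{E}})) \subset \mathcal{E}_{\mathbb{R}}^{*}$.

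Given this wavefront bound, pullback by any section $\mathbf{s}:M\to \mathcal{E}$ is well defined by H\"ormander's product theorem, since the image of $d\mathbf{s}$ meets the conormal of $M$ transversely wherever $\mathbf{s}^{-1}(M) = Z(\mathbf{s})$ is cleanly cut out; the pullback identity then follows from $\mathbf{s}^{*}\delta_M = \delta_{Z(\mathbf{s})}$ and $\mathbf{s}^{*}\pi^{*} = \mathrm{id}$. Finally, to compare two metrics $H_0^{\mathcal{E}}, H_1^{\mathcal{E}}$, I would run the same transgression along a path of metrics, interpolating the Mathai--Quillen representatives. Since the $\delta_M$-terms cancel, the difference $\hat{\mathbf{e}}(H_0^{\mathcal{E}}) - \hat{\mathbf{e}}(H_1^{\mathcal{E}})$ is $\partial$- and $\bar\partial$-exact up to a smooth remainder obtained by integrating the infinitesimal variation of the Chern--Weil Euler form; this remainder is by construction the Bott--Chern secondary class $\mathbf{e}(H_1^{\mathcal{E}}, H_0^{\mathcal{E}})$.

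The main obstacle is entirely analytic: one must show that all the limits in $t$ and the double transgression converge not merely in cohomology but as currents, with the correct wavefront set, uniformly in the metric. This uniform microlocal control of the Mathai--Quillen Gaussians and their derivatives is the technical heart of the Bismut--Gillet--Soul\'e construction, and is what justifies pulling back by an arbitrary (possibly singular) section $\mathbf{s}$.
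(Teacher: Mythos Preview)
The paper does not prove this theorem; it is quoted verbatim as a black-box result from \cite{bgs2007} (``We need the following result from \cite{bgs2007}'') and then applied. So there is no ``paper's own proof'' to compare your proposal against.

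That said, your sketch is essentially the standard Bismut--Gillet--Soul\'e argument via Mathai--Quillen superconnection currents, and the outline is correct in spirit: build the Gaussian Thom form family $\alpha_t$, transgress in $t$ to produce $\hat{\mathbf{e}}$, read off the two endpoint limits, control the wavefront set from the Gaussian decay off the zero section, and then run a second transgression in the metric variable to get the Bott--Chern secondary class. The one place where your language is slightly loose is the pullback step: H\"ormander's criterion requires that the conormal bundle of the graph of $\mathbf{s}$ be disjoint from $\mathrm{WF}(\hat{\mathbf{e}})$, and this holds for \emph{any} holomorphic section (not just those cutting out $Z(\mathbf{s})$ cleanly), precisely because $\mathrm{WF}(\hat{\mathbf{e}})\subset \mathcal{E}^*_{\mathbb{R}}$ is contained in the real conormal to the zero section and a holomorphic section is never tangent to the fiber in the real sense. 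This is what allows $\mathbf{s}$ to be arbitrary, as the theorem claims.
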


We apply this result to our situation. $M=\mathbb{X}\times \mathbb{P}(W)$ , $\mathcal{E}=\mathcal{Q}\otimes\mathcal{O}(1)$ , $\mathbf{s}=\Psi$ so that ${Z(\mathbf{s})}=I$.

We assume that our two metrics $H^{\mathcal{E}}_0$ and $H_S^{\mathcal{E}}$ satisfy the following conditions \footnote{Condition A2 comes from Lemma 4.1 on pg. 278 of \cite{paul2012}.} :
\begin{align*}
&\mbox{A1.}\ \frac{\sqrt{-1}}{2\pi}\dl\dlb\Psi^*\hat{\mathbf{e}}( H^{\mathcal{E}}_0)\wedge\om_{FS}^l=\delta_{I}\wedge\om_{FS}^l \quad l+1:=\dim(W)\\
\ \\
&\mbox{A2. The function}\ S\ni s\mapsto \int_{\mathbb{X}_{s}\times\mathbb{P}(W)}{\Psi}^{*}\hat{ \mathbf{e}}( H_S^{\mathcal{E}})\wedge\om_{FS}^l \
\mbox{is pluriharmonic .} 
\end{align*}
We need one more ingredient to state our next result. 
 Let $d\in\mathbb{Z}_{>0}$ .  Recall that the Mahler measure is defined by
 \begin{align*}
 \Theta :|\mathcal{O}( {d})|\ra \mathbb{R} \qquad \Theta([f]):= \int_{\mathbb{P}(W)}\log\frac{|f|^2_{h_{FS}^{d}}(\cdot)}{||f||_2^2}\om^{l}_{FS} \ .
 \end{align*}
 Tian has shown \cite{tian97} that
 \begin{proposition}
 	$\Theta$ is H\" older continuous , in particular it is bounded .
 \end{proposition}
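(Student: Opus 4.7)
My plan is to reduce the statement to the compact unit sphere in $\mathbb{C}_{d}[W]$ via the rescaling invariance of the integrand, establish a uniform sub-level-set (Skoda--\L ojasiewicz type) estimate for degree $d$ polynomials, and then deduce H\"older continuity by a standard ``good set / bad set'' splitting of the difference integral. First, the integrand in the definition of $\Theta([f])$ is invariant under $f\mapsto\lambda f$ for $\lambda\in\mathbb{C}^*$, since both $|f|^2_{h_{FS}^{d}}$ and $\|f\|_2^2$ scale by $|\lambda|^2$; hence $\Theta$ genuinely descends to $|\mathcal{O}(d)|$ and it suffices to analyze it on the compact unit sphere $S_2:=\{f\in\mathbb{C}_{d}[W]:\|f\|_2=1\}$. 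Applying Jensen's inequality to the concave function $\log$ against the normalized volume form $\om_{FS}^l$ immediately produces an upper bound $\Theta([f])\le 0$.

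The technical heart of the argument is a \emph{uniform} sub-level-set estimate: there exist $\alpha>0$ and $C>0$, depending only on $d$ and $l$, such that for every $f\in S_2$ and every $\epsilon>0$,
\begin{align*}
\mathrm{Vol}_{\om_{FS}^l}\bigl\{z\in\mathbb{P}(W):|f(z)|^2_{h_{FS}^{d}}<\epsilon\bigr\}\ \le\ C\,\epsilon^{\alpha}.
\end{align*}
For each fixed $f$ this is a classical Skoda-type bound (every nonzero section admits a local integrability exponent for $|f|^{-2\alpha}$). The substantive point is that a single pair $(\alpha,C)$ can be chosen to work uniformly across $S_2$; I would obtain this by compactness, covering $S_2$ by finitely many neighborhoods on each of which the zero locus moves continuously and a common exponent suffices. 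As a corollary, $\log|f|^2_{h_{FS}^{d}}$ admits a uniform $L^p$ bound on $S_2$ for some $p>1$, which in particular yields the boundedness of $\Theta$.

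For H\"older continuity, take $f,g\in S_2$ with $\|f-g\|_2\le\delta$ small. Since $\mathbb{C}_{d}[W]$ is finite dimensional, all norms are equivalent and $|f-g|_{h_{FS}^{d}}\le C'\delta$ pointwise on $\mathbb{P}(W)$. Partition $\mathbb{P}(W)=A_\epsilon\sqcup B_\epsilon$, where $A_\epsilon:=\{\min(|f|_{h_{FS}^{d}},|g|_{h_{FS}^{d}})\ge\sqrt{\epsilon}\}$. On $A_\epsilon$ the elementary inequality $|\log|f|^2-\log|g|^2|\le 2|f-g|/\min(|f|,|g|)\le 2C'\delta/\sqrt{\epsilon}$ contributes $O(\delta/\sqrt{\epsilon})$ to $|\Theta(f)-\Theta(g)|$. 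On $B_\epsilon$, H\"older's inequality combined with the uniform $L^p$ bound on $\log|f|^2,\log|g|^2$ and the sub-level-set estimate contributes $O(\epsilon^{\alpha/q})$, where $q$ is the conjugate exponent of $p$. Optimizing $\epsilon=\delta^{\gamma}$ for a suitable $\gamma\in(0,1)$ yields $|\Theta(f)-\Theta(g)|\le C\delta^{\beta}$ for an explicit H\"older exponent $\beta>0$.

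The main obstacle will be the uniform Skoda estimate: pointwise local integrability of $|f|^{-2\alpha}$ is classical, but extracting a single exponent and constant that work over the whole compact family $S_2$ is the substantive input, and this is precisely where the cited result of Tian \cite{tian97} is doing the essential work. Once it is in place the remaining domain-splitting calculation is entirely routine.
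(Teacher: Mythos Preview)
The paper does not supply its own proof of this proposition: it is simply stated as a result of Tian with a citation to \cite{tian97}, and is used as a black box thereafter. So there is no in-paper argument to compare against.

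Your sketch is a reasonable outline of how the result is actually proved. The reduction to the $L^2$-unit sphere by scale invariance is correct, and Jensen gives the trivial upper bound. The substantive step, as you correctly isolate, is the uniform sub-level-set (equivalently, uniform $\alpha$-invariant / Skoda-type) estimate for degree-$d$ polynomials on $\mathbb{P}(W)$; once you have a single $(\alpha,C)$ valid for all $f$ on the sphere, the good-set/bad-set splitting you describe goes through and yields an explicit H\"older exponent. One refinement: your compactness argument for the uniform exponent (``cover $S_2$ by finitely many neighborhoods on each of which the zero locus moves continuously'') is too vague as written---the zero locus can degenerate badly as $f$ varies, and continuity of the zero set alone does not obviously transport an integrability exponent. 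The clean way to get uniformity is either to invoke the semicontinuity of log canonical thresholds together with the fact that the lct of a degree-$d$ polynomial is bounded below by $1/d$, or to use a direct elementary argument (e.g.\ Fubini along linear slices, as in Tian's original paper) that produces the estimate with constants depending only on $d$ and $l$. With that in place your argument is complete.
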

Finally define $\theta(s):=\Theta(\Delta(s))$, and let $o$ be a basepoint in $S$ such that $H^{\mathcal{E}}_S|_{\mathbb{X}_o}=H^{\mathcal{E}}_0|_{\mathbb{X}_o}$\footnote {In our case this comes from assuming that $h(o)=Id_{W}\in \mathcal{H}^{+}(W, <\cdot\ ,\ \cdot>)$.}.\\
\ \\ 
The following corollary of Theorem \ref{hilb-disc} extends several ideas of Gang Tian (especially \cite{kenhyp}, \cite{bottchrnfrms}, and \cite{tian97} ) as well as the first author \cite{paul2012} .
\begin{corollary}\label{metrics} 
  The function
\begin{align} 
S\ni s\mapsto \log\left(e^{\theta(s)} \frac{||\Delta(s)||_2^2}{||\Delta( o )||_2^2}\right)-\int_{\mathbb{X}_s\times\mathbb{P}(W)} \mathbf{e}(H^{\mathcal{E}}_{0},{H}^{\mathcal{E}}_{S})\wedge \om_{FS}^l 
\end{align} \\
\ \\
is pluriharmonic . Therefore, if $\pi_1(S)=\{1\}$ there is an entire function $f:S\ra \mathbb{C}$ such that \\
\ \\
\begin{align}\label{pluriharmonic}
\log\left(e^{\theta(s)} \frac{||\Delta(s)||_2^2}{||\Delta( o )||_2^2}\right)=\int_{\mathbb{X}_s\times\mathbb{P}(W)} \mathbf{e}(H^{\mathcal{E}}_{0},{H}^{\mathcal{E}}_{S})\wedge \om_{FS}^l +\log|f(s)|^2 \ .
\end{align}
\end{corollary}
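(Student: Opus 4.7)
The plan is to show that $L(s) - T(s)$ is pluriharmonic on $S$, where $L(s):=\log\bigl(e^{\theta(s)}\|\Delta(s)\|_2^2/\|\Delta(o)\|_2^2\bigr)$ and $T(s):=\int_{\mathbb{X}_s\times\mathbb{P}(W)}\mathbf{e}(H^{\mathcal{E}}_0,H^{\mathcal{E}}_S)\wedge\om_{FS}^l$. Once this is in hand, any real pluriharmonic function on the simply connected $S$ equals $2\,\mathrm{Re}(h)$ for some holomorphic $h:S\to\mathbb{C}$, and setting $f:=e^h$ produces a nowhere-zero entire function with $L-T=\log|f|^2$, which is precisely (\ref{pluriharmonic}). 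The heart of the matter is therefore to verify $dd^c L = dd^c T$ as $(1,1)$-currents on $S$.

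To handle $L$: using the definition of $\Theta$ and the normalization $\int_{\mathbb{P}(W)}\om_{FS}^l=1$, one sees that $L(s) = \int_{\mathbb{P}(W)}\log|f_s|^2_{h_{FS}^{\widehat{d}}}\,\om_{FS}^l + \mathrm{const}$, with $f_s:=\Delta(s)$. By Theorem \ref{hilb-disc} the lifted map $\Delta$ is holomorphic, so $f$ defines a holomorphic section of $p_2^*\mathcal{O}(\widehat{d})$ on $S\times\mathbb{P}(W)$ cutting out the divisor $D:=\bigcup_s\{s\}\times Z_s$. Poincar\'e--Lelong gives $dd^c\log|f|^2_{h_{FS}^{\widehat{d}}} = \delta_D - \widehat{d}\cdot p_2^*\om_{FS}$; wedging with $\om_{FS}^l$ kills the second term since $\om_{FS}^{l+1}=0$, and commuting $dd^c$ with fibre-integration along $p_S:S\times\mathbb{P}(W)\to S$ yields
\[
dd^c L = (p_S)_*\bigl(\delta_D\wedge\om_{FS}^l\bigr).
\]

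To handle $T$: the Bott--Chern double transgression from the Bismut--Gillet--Soul\'e theorem cited above reads $\Psi^*\hat{\mathbf{e}}(H^{\mathcal{E}}_S) - \Psi^*\hat{\mathbf{e}}(H^{\mathcal{E}}_0) = \mathbf{e}(H^{\mathcal{E}}_S,H^{\mathcal{E}}_0)$ modulo $\partial$ and $\bar\partial$ terms. Wedging with $\om_{FS}^l$ and integrating over the compact fibres of $\pi_S:\mathbb{X}_S\times\mathbb{P}(W)\to S$ annihilates these exact remainders and produces $F(s)-G_0(s)=-T(s)$, where $F$ and $G_0$ denote the fibre-integrals of $\Psi^*\hat{\mathbf{e}}(H^{\mathcal{E}}_S)\wedge\om_{FS}^l$ and $\Psi^*\hat{\mathbf{e}}(H^{\mathcal{E}}_0)\wedge\om_{FS}^l$ respectively, and I have used the antisymmetry $\mathbf{e}(H_0,H_S)=-\mathbf{e}(H_S,H_0)$. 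By assumption A2, $F$ is pluriharmonic, so $dd^c T = dd^c G_0$; commuting $dd^c$ past the fibre-integration and invoking A1 then gives
\[
dd^c T = (\pi_S)_*\bigl(\delta_I\wedge\om_{FS}^l\bigr).
\]

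The proof thus reduces to the single push-forward identity $(\pi_S)_*(\delta_I\wedge\om_{FS}^l) = (p_S)_*(\delta_D\wedge\om_{FS}^l)$ on $S$. Factoring $\pi_S$ through $\mathbb{X}_S\times\mathbb{P}(W) \to S\times\mathbb{P}(W) \to S$ and using that $\om_{FS}$ is pulled back from $\mathbb{P}(W)$, the projection formula reduces this to $\pi_*\delta_I = \delta_Z = \delta_D$: the first equality is precisely the birationality hypothesis in the basic set-up, and the second is the defining property $Z_s=(f_s=0)$ of the discriminant map. The main obstacle is this last step: one must control the contribution of the exceptional locus of $\pi:I\to Z$ (where the factor $\om_{FS}^l$, pulled back from the target $\mathbb{P}(W)$, is essential for the projection formula to go through cleanly) and carefully justify commuting $dd^c$ with fibre-integration for these singular currents. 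This is precisely the analytic content packaged by the ``Hilbert--Chow for discriminants'' perspective of Theorem \ref{hilb-disc}, and once the equality of push-forwards is in place the rest of the argument assembles.
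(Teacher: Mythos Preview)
Your proof is correct and follows essentially the same route as the paper's own argument in Section~5: Poincar\'e--Lelong on $S\times\mathbb{P}(W)$ (Proposition~\ref{poincare-lelong}) to express $dd^cL$ as the push-forward $(p_S)_*(\delta_Z\wedge\om_{FS}^l)$, the birationality $\pi_*\delta_I=\delta_Z$ to pass to $I$, axiom~A1 to rewrite this as the push-forward of $dd^c\Psi^*\hat{\mathbf{e}}(H_0^{\mathcal{E}})\wedge\om_{FS}^l$, and then the Bismut--Gillet--Soul\'e variation formula together with~A2 to identify the result with $dd^cT$. The only difference is organizational: the paper tests both sides against a compactly supported form $\eta$ on $S$ and runs the chain of equalities in one direction, whereas you compute $dd^cL$ and $dd^cT$ separately and match them at the end; the ingredients and their use are identical.
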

\begin{remark}
\emph{In all applications $f(s)$ is {constant} .}
\end{remark}
\subsection{$G$-equivariant Families and Bergman metrics}
Let $G$ be a reductive complex algebraic group. The family $\mathbb{X}\ra B$ is said to be $G$ equivariant provided that there exists
\begin{itemize}
\item A rational representation $G\ra \mathbb{C}^{N+1}$ \\
\ \\
\item A rational action $G\ra \mbox{Aut}(B)$ such that 
\begin{align*}
\sigma\cdot \mathbb{X}_b=\mathbb{X}_{\sigma\cdot b}
\end{align*}
 \end{itemize}
As the reader can imagine a $G$ equivariant basic set up for this family requires an action of $G$ on $Q$ which preserves $W$. This gives another (rational) representation of $G$
\begin{align*}
\rho_{W}:G \ra \mbox{Aut}(W) \ .
\end{align*} 
It is not hard to see that the canonical section $\Psi$ is $G$ invariant, therefore we have that
\begin{itemize}
\item $\sigma\cdot Z_{b}=Z_{\sigma \cdot b}$ for   $\sigma\in G $ . \\
\item $\sigma \cdot f_{b}=f_{\sigma\cdot b}$ .
\end{itemize}
 As a consequence the map $\Delta: B\ra |\mathcal{O}(d)|$ is $G$ equivariant. Now choose any basepoint $o\in B$ and consider the mapping 
 \begin{align*}
 G\ni\sigma \ra \sigma\cdot o\in B
 \end{align*}
$G$  now becomes our choice of $S$.  Observe that the change-of-base family $\mathbb{X}_G$ is given concretely by
 \begin{align*}
 \mathbb{X}_G=\{ (\sigma , y)\in G\times \cpn |\ y\in \mathbb{X}_{\sigma\cdot o} \} \cong G\times \mathbb{X}_{ o}  .
 \end{align*}

As in the previous section a fixed choice of Hermitian metric $< \cdot , \cdot >$ on $W$ provides us with a mapping
\begin{align*}
h:G\ni \sigma\ra \rho_{W}(\sigma)^{\dagger}\rho_{W}(\sigma)\in \mathcal{H}^{+}(W, <\cdot\ ,\ \cdot>)\ .
\end{align*}
$h$ induces a dynamic metric on $\mathbb{X}_G\times W$ 
\begin{align*}
 W\ni w_1 ,w_2 \ra < \rho_{W}(\sigma)w_1,\rho_{W}(\sigma)w_2>\ .
\end{align*}
 The static metric is taken to be $h(e)$, the same as in the previous section.   We assume that the metrics induced on $\mathcal{E}$ satisfy axioms A1. and A2. Now we restate corollary \ref{pluriharmonic} in the present special case, assuming the entire function $f:G\ra \mathbb{C}$ is constant.
\begin{corollary}\label{Gpluriharmonic} Assume $G$ is simply connected. Then the following identity holds for any fiber $\mathbb{X}_{o}$ of the family $\mathbb{X}\ra B$
\begin{align} 
\log\left(e^{\theta(\sigma)-\theta(e)} \frac{||\sigma\cdot\Delta(o)||_2^2}{||\Delta( o )||_2^2}\right)=\int_{\mathbb{X}_o\times\mathbb{P}(W)} \mathbf{e}(H^{\mathcal{E}}(e),{H}^{\mathcal{E}}(\sigma))\wedge \om_{FS}^l  \ .
\end{align}
 \end{corollary}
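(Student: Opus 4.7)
The plan is to derive Corollary \ref{Gpluriharmonic} as a direct specialization of Corollary \ref{metrics} to the base change $S = G$ given by the orbit map $\sigma \mapsto \sigma \cdot o$. Since $G$ is simply connected by hypothesis, Corollary \ref{metrics} furnishes an entire function $f: G \to \mathbb{C}$ satisfying (\ref{pluriharmonic}); by the additional assumption stated just before the corollary, $f$ is constant. The $G$-equivariance of $\Delta$ established above gives $\Delta(\sigma) = \sigma \cdot \Delta(o)$, which already matches the numerator $\|\sigma \cdot \Delta(o)\|_2^2$ on the left-hand side of the claimed identity.

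The next step is to pin down the constant value of $\log|f|^2$ by evaluating the identity (\ref{pluriharmonic}) at $\sigma = e$. At the identity, the ratio $\|\Delta(\sigma)\|_2^2 / \|\Delta(o)\|_2^2$ equals one, while the hypothesis $h(e) = \mathrm{Id}_W$ ensures that $H^{\mathcal{E}}_S|_{\mathbb{X}_o} = H^{\mathcal{E}}_0|_{\mathbb{X}_o}$, so the Bott-Chern transgression $\mathbf{e}(H^{\mathcal{E}}_0, H^{\mathcal{E}}_S)$ vanishes on the fiber over $e$. Reading (\ref{pluriharmonic}) at this point then forces $\log|f|^2 = \theta(e)$, and substituting this constant back and absorbing it into the exponential on the left produces precisely the prefactor $e^{\theta(\sigma) - \theta(e)}$ in the statement.

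The last step is to identify the integrand and domain of integration. Using the trivialization $\mathbb{X}_G \cong G \times \mathbb{X}_o$ of the change-of-base family, the fiber over $\sigma$ is carried to $\mathbb{X}_o$ via the action of $\sigma$; transported back in this way, the dynamic metric $H^{\mathcal{E}}_S$ restricted to the fiber over $\sigma$ becomes the metric denoted $H^{\mathcal{E}}(\sigma)$ in the statement, while the static metric becomes $H^{\mathcal{E}}(e)$. Since the Bott-Chern transgression is natural under such pullbacks, the integral over $\mathbb{X}_\sigma \times \mathbb{P}(W)$ appearing in (\ref{pluriharmonic}) transforms into the integral over $\mathbb{X}_o \times \mathbb{P}(W)$ of $\mathbf{e}(H^{\mathcal{E}}(e), H^{\mathcal{E}}(\sigma)) \wedge \om_{FS}^l$, which completes the derivation.

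The main obstacle, and really the only nontrivial point, is verifying that axioms A1 and A2 hold in this equivariant setting so that Corollary \ref{metrics} may be applied in the first place. Axiom A1 is automatic since it is a statement on $\mathbb{X} \times \mathbb{P}(W)$ independent of the parameter. Axiom A2 is more delicate: checking pluriharmonicity of $\sigma \mapsto \int_{\mathbb{X}_\sigma \times \mathbb{P}(W)} \Psi^* \hat{\mathbf{e}}(H^{\mathcal{E}}_S) \wedge \om_{FS}^l$ amounts, via Bismut--Gillet--Soul\'e, to exhibiting this function as $\log$ of the squared modulus of a holomorphic function on $G$, which is precisely the content of Lemma 4.1 of \cite{paul2012} cited in the footnote and is the substantive computation behind the corollary.
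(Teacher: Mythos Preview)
Your approach is essentially the paper's own: Corollary \ref{Gpluriharmonic} is presented there simply as a restatement of Corollary \ref{metrics} (equation (\ref{pluriharmonic})) for the base change $S=G$, under the standing assumption that $f$ is constant. Your normalization step---evaluating at $\sigma=e$ to identify the constant as $\theta(e)$ and thereby obtain the prefactor $e^{\theta(\sigma)-\theta(e)}$---is exactly the missing arithmetic the paper leaves implicit, and your use of the trivialization $\mathbb{X}_G\cong G\times\mathbb{X}_o$ to rewrite the fiber integral over $\mathbb{X}_o$ is correct.

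One point of divergence: your final paragraph treats the verification of A1 and A2 as ``the main obstacle,'' but in the paper these are explicit hypotheses, not conclusions. The text immediately preceding Corollary \ref{Gpluriharmonic} reads ``We assume that the metrics induced on $\mathcal{E}$ satisfy axioms A1.\ and A2.'' So no verification is needed for the corollary as stated; A1 and A2 are part of the setup, with the footnote to Lemma~4.1 of \cite{paul2012} indicating where A2 is actually established in the concrete cases of interest. Relatedly, your claim that ``Axiom A1 is automatic since it is a statement on $\mathbb{X}\times\mathbb{P}(W)$ independent of the parameter'' is not accurate: A1 is a genuine constraint on the static metric $H^{\mathcal{E}}_0$ (it forces a specific vanishing after wedging with $\om_{FS}^l$), not something that holds for free. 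This does not affect the correctness of your derivation, since the paper simply assumes A1, but the commentary should be dropped or corrected.
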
 
The right hand side of \ref{Gpluriharmonic} can be expressed as an ``action functional'' $E_{\om}$ restricted to the Bergman potentials $\varphi_{\sigma}$ associated to the embedding  $ \mathbb{X}_o\ra \cpn $.

\begin{corollary}\label{asymptotics} Let $\lambda:\mathbb{C}^*\ra G$ be a one parameter subgroup of $G$, then 
\begin{align} 
E_{\om}(\varphi_{\lambda(t)}) =w_{\lambda}(\Delta(o))\log|t|^2+O(1) \quad |t|<< 1 \ ,
\end{align}
where $w_{\lambda}(\Delta(o))\in \mathbb{Z}$ denotes the weight \footnote{Mumford called this the \emph{slope} in \cite{git} .} of $\Delta(o)$ with respect to $\lambda$ . Assume that
\begin{align*}
\infty:=\lambda(0)\cdot o \in B \ ,
\end{align*}
with corresponding fiber $\mathbb{X}_{\infty}\subset \cpn$. Then
\begin{align}\label{genfutaki}
E_{\om}(\varphi_{\lambda(t)}) =w_{\lambda}(\Delta(\infty))\log|t|^2+O(1) \quad |t|<< 1 \ .
\end{align}
\end{corollary}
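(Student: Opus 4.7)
The plan is to reduce both claims to the identity of Corollary \ref{Gpluriharmonic} applied to $\sigma=\lambda(t)$ and then to extract the leading $\log|t|^2$ behaviour by a weight space decomposition of $\Delta(o)$ on $\mathbb{C}_{\widehat{d}}[W]$. Substituting $\sigma=\lambda(t)$ in Corollary \ref{Gpluriharmonic}, and using that the right hand side equals the action functional $E_{\om}(\vplt)$ along the Bergman potential (as explained just before the corollary), one obtains
\begin{align*}
E_{\om}(\vplt)=\log\frac{||\lambda(t)\cdot\Delta(o)||_2^2}{||\Delta(o)||_2^2}+\bigl(\theta(\lambda(t))-\theta(e)\bigr).
\end{align*}
By Tian's proposition above ($\Theta$ is bounded), the parenthesised term is $O(1)$ as $|t|\to 0$, so the problem reduces to the asymptotics of $\log||\lambda(t)\cdot\Delta(o)||_2^2$.

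Next, I would decompose $\Delta(o)=\sum_{r}v_{r}$ into weight spaces of the induced $\lambda$ representation on $\mathbb{C}_{\widehat{d}}[W]$, so that $\lambda(t)\cdot v_{r}=t^{r}v_{r}$, and set $r_{0}:=\min\{r:v_{r}\neq 0\}$. Writing $||\lambda(t)\cdot\Delta(o)||_2^2$ as a double sum and factoring $|t|^{2r_{0}}$, every cross term carries an extra $t^{r-r_{0}}$ or $\bar t^{r'-r_{0}}$ with nonnegative exponent (strictly positive off the diagonal), so the leading behaviour is $|t|^{2r_{0}}\bigl(||v_{r_{0}}||_2^2+o(1)\bigr)$. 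Taking logarithms and recalling that $r_{0}=w_{\lambda}(\Delta(o))$ by Mumford's definition yields the first formula. Note that no $\lambda(S^{1})$ averaging of the Hermitean form is needed, since the leading term is insensitive to non orthogonality of the weight spaces.

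For the second formula, I would invoke Theorem \ref{hilb-disc}: if $\infty=\lambda(0)\cdot o\in B$, then $\Delta$ extends holomorphically through $t=0$ with value $\Delta(\infty)\in|\mathcal{O}(\widehat{d})|$. Unwinding the lift, $\lim_{t\to 0}t^{-r_{0}}\lambda(t)\cdot\Delta(o)=v_{r_{0}}$ is a nonzero vector on the line $\mathbb{C}\cdot\Delta(\infty)$, so $\lambda$ acts on this line by the single weight $r_{0}$; by definition this weight is $w_{\lambda}(\Delta(\infty))$, and hence $w_{\lambda}(\Delta(o))=w_{\lambda}(\Delta(\infty))$, so \eqref{genfutaki} is just a rewriting of the first formula. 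The only nontrivial input is the prior identification of the Bott-Chern integral in Corollary \ref{Gpluriharmonic} with $E_{\om}$ along Bergman potentials, which is taken as given; after that the argument reduces to formal weight space algebra plus the holomorphicity in Theorem \ref{hilb-disc}.
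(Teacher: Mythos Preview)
Your argument is correct and matches the paper's approach. The paper does not actually spell out a proof of this corollary; it only remarks that the crucial point behind \eqref{genfutaki} is the limit $\lim_{t\to 0}\lambda(t)\cdot\Delta(o)=\Delta(\infty)$, which follows from Theorem~\ref{hilb-disc}. Your proposal supplies precisely the missing details: you substitute $\sigma=\lambda(t)$ into Corollary~\ref{Gpluriharmonic}, absorb $\theta$ using Tian's boundedness result, extract the leading $\log|t|^2$ term via the weight decomposition of $\Delta(o)$, and then identify the minimal weight with $w_{\lambda}(\Delta(\infty))$ by combining the projective limit coming from Theorem~\ref{hilb-disc} with the $\lambda$-invariance of the line $\mathbb{C}\cdot\Delta(\infty)$. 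This is exactly the argument the paper has in mind.
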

The important point in (\ref{genfutaki}) is that
\begin{align}\label{limitcycle}
\lim_{t\ra 0}\lambda(t)\cdot \Delta(o)=\Delta(\infty) \ ,
\end{align}
which follows at once from Theorem \ref{hilb-disc} .
  \begin{remark}
 \emph{(\ref{limitcycle}) can be used to recover a well known result of Ding and Tian (see the claim just below (3.15) on pg. 328 of \cite{dingtian} ) and was our initial motivation for writing this article.}
  \end{remark}

To summarize, $\Delta(\sigma\cdot o)$ is for each $\sigma \in G$ a polynomial on $W$ and $E_{\om}(\varphi_{\sigma})$ is a generalization of the Mabuchi K-energy map. For special choices of $Q$ it \emph{is} the Mabuchi energy up to lower order terms.  Our work suggests that most action functionals $E_{\om}$ of  interest on a polarized manifold are induced by the basic set up and in particular have logarithmic singularities along $G$.   In particular, along algebraic one parameter subgroups $\lambda(t)$ of $G$ the asymptotic expansion of the integral of $E_{\om}(\varphi_{\lambda(t)})$ as $t\ra 0$ is determined by the limiting behavior of the \emph{coefficients} of $\Delta(\lambda(t)\cdot o)$ as $t\ra 0$  \footnote{For a concrete example of this see Theorem B of \cite{paul2012} .}. \\
\ \\
\emph{Organization.} $\Delta$ is a (homogeneous) polynomial with zero set $Z\subset \mathbb{P}(W) $ say. In the second section we discuss the classical setting of how such $Z$'s arise. They are objects of elimination theory. In the third section we recall in detail an essential and rather ingenious idea of Cayley that allows us to \emph{construct} $\Delta$ from $Z$. Usually one can easily detect whether or not $Z$ has codimension one but it is very hard to find an explicit defining polynomial . This is the main problem of classical elimination theory. In the fourth section we use a vast generalization of Cayley's idea due to Grothendieck, Knudsen, and Mumford \cite{detdiv} which extends Cayley's construction from a \emph{fixed} variety to the more natural setting of a \emph{family} of varieties over some base $S$, this is where the Fourier-Mukai Transform makes an appearance. In the final section we prove Corollary \ref{metrics} by introducing metrics on everything and invoking the currents of Bismut , Gillet, and Soul\'e  on the one hand, and  the Poincar\'e Lelong formula on the other.
\section{\ \ Classical Elimination Theory}
 Let $X^n\hookrightarrow \cpn$ be a linearly normal subvariety. We consider a parameter space $\mathcal{F}$ of ``linear sub-objects'' $f$ of $\cpn$. That is, $\mathcal{F}$ may parametrize points, lines, planes, flags, etc. of $\cpn$.
Consider the admittedly vague statement
\begin{center}$(**)$ ``\emph{The general member $f$ of $\mathcal{F}$ has a certain order of contact along ${X}$}''   \end{center}
Let $Z$ be the (proper) subvariety of those $f\in\mathcal{S}$ \emph{violating} $(**)$. \newline
\ \\
\noindent\textbf{Example 1. } (\emph{Cayley-Chow Forms}) 
We take $\mathcal{S}$ to be the Grassmannian of $N-n-1$ dimensional linear subspaces of $\cpn$
\[\mathcal{F}=\gr:=\{L\subset \mathbb{C}^{N+1}\ | \ \dim(L)=N-n-1\} \ . \]
In this case  the general member $f=L$ of $\mathcal{S}$ \emph{fails} to meet $X$. Therefore we take
\[ Z=\{L\in \gr\ | L\cap X \neq \emptyset\}\ .\]
\ \\
\noindent\textbf{Example 2.} (\emph{Dual Varieties}) 
Let $\mathcal{F}=\check{\mathbb{P}}^N$ the dual projective space parametrizing hyperplanes $H$ inside $\cpn$. Bertini's theorem provides us with our ``order of contact'' condition. Namely, for generic $H\in \check{\mathbb{P}}^N$ $H\cap X$ is \emph{smooth}.
Therefore we define
\[ Z=\{ H\in \check{\mathbb{P}}^N\ | \ H\cap X \ \mbox{is singular} \ \} .\ \] 
Observe that
\[ Z=\{ H\in \check{\mathbb{P}}^N\ | \ \mbox{there exists}\ p\in X\ \mbox{such that}\ \mathbb{T}_p(X)\subset H\} . \]

 The next example includes both \textbf{1} and \textbf{2} as extreme cases.\\
\ \\
\noindent\textbf{Example 3.} (\emph{Higher associated hypersurfaces} (see \cite{gkz} pg. 104))\newline
Fix $L\subset \mathbb{C}^{N+1}$ , $\dim(L)=n+1<N+1$. Consider the subset $\mathcal{U}$ of the Grassmannian defined by
\begin{align*}
\begin{split}
 \mathcal{U}:=\{E\in G(r;\ \mathbb{C}^{N+1})|\  H^{\bull}\left(0\ra E\cap L\ra E \overset{\pi_{L}}{\ra}\mathbb{C}^{N+1}/L\ra 0 \right)=0\}\ .
 \end{split}
 \end{align*}
In order that $\mathcal{U}$ be open and dense in $G(r;\ \mathbb{C}^{N+1})$ it is enough to have 
\begin{align*}
r=\dim(E)\geq N+1-(n+1)=N-n .
\end{align*}
Therefore we set $r=N-l$ where $0\leq l\leq n$. 
By the rank plus nullity theorem of linear algebra we have
\begin{align*}
\dim(E\cap L)+\dim(\pi_{L}(E)=\dim(E)=N-l \ .
\end{align*}
Therfore $E\in Z:=G(r;\ \mathbb{C}^{N+1})\setminus \mathcal{U}$ if and only if
\begin{align*}
\dim(E\cap L)\geq n-l+1\ .
 \end{align*}
 
 This motivates the following. Let $X^n\ra\cpn$. Fix $0\leq l \leq n$.   We define
 \begin{align*}
 Z:=
 \{E\in \mathbb{G}(N-(l+1),\cpn)\ | \  \exists \ p\in X\ \mbox{\emph{such that}}\ p\in E\ \mbox{\emph{and}}
 \ \dim(E\cap \mathbb{T}_p(X))\geq n-l\} \ .
 \end{align*}
In this situation we denote $Z$ by $Z_{l+1}(X)$. The reader should observe that 
\begin{align*}
& Z_{n+1}(X)=\{R_X=0\}\ \mbox{and}\ Z_1(X)=\{\Delta_X=0\}\ .
\end{align*}
 
$Z_n(X)$ plays a fundamental role in our study of the K-energy (see \cite{paul2012}) . In fact, we have that
\begin{align*}
Z_n(X)=\{\Delta_{\xhyp}=0\}\ ,
\end{align*}
the dual variety of the Segre image of  $\xhyp$.

In the situations of interest to us we may assume that $\mathcal{F}\cong W$ for an appropriate finite dimensional vector space $W$ . For example in \textbf{1} we may replace $\gr$ with $W=M_{(n+1)\times(N+1)}(\mathbb{C})$. 

In our applications $Z$ has \emph{codimension one} .  Then $Z\subset W$ is an irreducible algebraic hypersurface defined by a single polynomial $R_Z$
\[ Z=\{ f\in  W\ |\ R_Z(f)=0\}\ .\]
$Z$ is naturally dominated by the variety of zeros of a larger system $\{ s_j(p,f)=0\}$ in more variables $p\in X$. 
We define the \emph{incidence variety} $I_X$ by 
\[ I_X:=\{(p,\ f)\in X\times W\ | \ s_j(p\ ,\ f )=0\}\subset X\times W \]
In geometric terms $(p,f)\in I_X$ if and only if $f$ fails to meet $X$ generically at $p$ (and possibly at some other point $q$ ) .
Therefore, $Z$ is the \emph{resultant system} obtained by eliminating the variable $p$
\[ R_Z(f)=0 \ \mbox{iff}\ f\in Z \ \mbox{iff}\ \{s_j(\cdot \ ,\ f )=0\} \ \mbox{has a solution $p$ in $X$ for the fixed $f\in W$}\ . \]

\section{\ \ Linear Algebra of Complexes and the Torsion of a exact Complex}
To begin let $\left(E^{\bull},\dl^{\bull}\right)$ be a bounded complex of finite dimensional $\mathbb{C}$ vector spaces .
\[ \begin{CD} 
0@>>>E^{0}@>\dl_{0}>> \dots @>>> E^{i}@>\dl_{i}>> E^{i+1}@>\dl_{i+1}>> \dots @>\dl_{n}>> E^{n+1}@>>>0 \ . \end{CD}
\]
Recall that the \emph{determinant of the complex} $\left(E^{\bull},\dl^{\bull}\right)$ is defined to be the one dimensional vector space
\begin{align*}
  \mathbf{Det}(E^{\bull})^{(-1)^n}:=  \bigotimes_{i=0}^{n+1}(\bigwedge^{r_i}E^{i})^{(-1)^{i+1}}\quad r_i:=\mbox{dim}(E^i)\ .
\end{align*}
\begin{remark}  $\mathbf{Det}(E^{\bull})$ does not depend the boundary operators.
\end{remark}
As usual, for any vector space $V$ we set $V^{-1}:= \mbox{Hom}_{\mathbb{C}}(V,\mathbb{C})$, the dual space to
$V$.
Let $H^{i}(E^{\bull},\dl^{\bull})$ denote the $i^{th}$ cohomology group of this complex.
When $V = \mathbf{0}$ , the zero vector space , we set $\mathbf{det}(V):= \mathbb{C}$.
The determinant of the cohomology is defined in exactly the same way

\begin{align*}
\mathbf{Det}(H^{\bull}(E^{\bull},\dl^{\bull}))^{(-1)^n}:= \bigotimes_{i=0}^{n+1}(\bigwedge^{b_i}H^{i}(E^{\bull},\dl^{\bull}))^{(-1)^{i+1}}\quad b_i:= \mbox{dim}(H^{i}(E^{\bull},\dl^{\bull}))\ .
\end{align*}

\noindent We have the following well known facts (\cite{detdiv}, \cite{bgs1}). \\
 {\bf{D1}}
\emph{Assume that the complex} $\left(E^{\bull},\dl^{\bull}\right)$ \emph{is acyclic, then} $\mathbf{Det}(E^{\bull})$ \emph{is canonically trivial}
\begin{align*}\label{trivial}
\tau(\dl^{\bull}):\mathbf{Det}(E^{\bull}) \cong \underline{\mathbb{C}}\ .
\end{align*}
As a corollary of this we have,\\
{\bf{D2}}
 \emph{There is a canonical isomorphism \footnote{ A ``canonical isomorphism''  is one that only depends on the boundary operators, not on any choice of basis.} between the determinant of the complex and the}
\emph{determinant of its cohomology}:
\begin{align*}
\tau(\dl^{\bull}):\mathbf{Det}(E^{\bull}) \cong \mathbf{Det}(H^{\bull}(E^{\bull},\dl^{\bull}))\ .
\end{align*}
It is {\bf{D1}} which is relevant for our purpose. It says is that 
\emph{there is a canonically given \textbf{nonzero} element of} $\mathbf{Det}(E^{\bull})$, called the \emph{torsion} of the complex, provided this complex is exact.
The torsion is the essential ingredient in the construction of $X$-resultants (Cayley-Chow forms) and $X$-discriminants (dual varieties) . 
We recall the construction for more information see \cite{bgs1}.

 Define $\kappa_{i}:= \mbox{dim}(\dl_i E^{i})$, now choose $S_{i}\in \wedge^{\kappa_{i}}(E^{i})$ with $\dl_i{S_{i}}\neq 0$, then $\dl_i{S_{i}}\wedge S_{i+1}$ spans $\bigwedge^{r_{i+1}}E^{i+1}$ (since the complex is exact), that is

\begin{align*}
 \bigwedge^{r_{i+1}}E^{i+1}= \mathbb{C}\dl_i{S_{i}}\wedge S_{i+1} .
\end{align*}
With this said we define\footnote{ The purpose of the exponent $(-1)^n$ will be revealed in the next section.}
\begin{align*}
{{\mathbf{Tor}\left(E^{\bull},\dl^{\bull}\right)^{(-1)^{n}}:= (S_{0})^{-1}\otimes(\dl_0 S_{0}\wedge S_{1})\otimes (\dl_1S_{1}\wedge S_{2})^{-1}\otimes \dots \otimes (\dl_{n} S_{n})^{(-1)^{n}}}} \ .
\end{align*}
Then we have the following reformulation of {\bf{D1}}.\\ 
\noindent{\bf{D3}}
\begin{align}
 \mathbf{Tor}\left(E^{\bull},\dl^{\bull}\right) \mbox{\emph{is independent of the choices}}\ S_{i}.
 \end{align}

By fixing a basis $\{e^i_{1},e^i_{2}, \dots e^i_{r_{i}}\}$ in each of the terms $E_{i}$ we may associate to this \emph{based} exact complex a \emph{scalar}.
\begin{align*}
\mathbf{Tor}\left(E^{\bull},\dl^{\bull};\{e^i_{1},e^i_{2}, \dots, e^i_{r_{i}}\} \right)\in \mathbb{C}^*.
\end{align*}
 Which is defined through the identity:
\begin{align*}
\mathbf{Tor}\left(E^{\bull},\dl^{\bull}\right)= \mathbf{Tor}\left(E^{\bull},\dl^{\bull}; \{e^i_{1},e^i_{2}, \dots e^i_{r_{i}}\}\right)\mathbf{det}(\dots e^i_{1},e^i_{2}, \dots, e^i_{r_{i}}\dots).
\end{align*}
Where we have set
\begin{align*}
\mathbf{det}(\dots e^i_{1},e^i_{2}, \dots, e^i_{r_{i}}\dots)^{(-1)^n}:= (e^0_{1}\wedge \dots \wedge 
e^0_{r_{0}})^{-1}\otimes \dots \otimes  (e^{n+1}_{1}\wedge \dots \wedge e^{n+1}_{r_{n+1}})^{(-1)^{n}}.
\end{align*}
When we have fixed a basis of our exact complex (that is, a basis of each term in the complex) we will call $\mathbf{Tor}\left(E^{\bull},\dl^{\bull}; \{e^i_{1},e^i_{2}, \dots ,e^i_{r_{i}}\}\right)$ the Torsion of the \emph{based exact} complex. It is, as we have said, a \emph{scalar quantity}.  
\begin{remark}
In the following sections we often base the complex without mentioning it explicitly and in such cases we write (incorrectly) $\mathbf{Tor}\left(E^{\bull},\dl^{\bull}\right)$ instead of \newline
$\mathbf{Tor}\left(E^{\bull},\dl^{\bull}; \{e^i_{1},e^i_{2}, \dots ,e^i_{r_{i}}\}\right)$.
\end{remark}
 
 We have the following well known \emph{scaling behavior} of the Torsion, which we state in the next proposition. Since it is so important for us, we provide the proof, which is yet another application of the rank plus nullity theorem.
\begin{proposition} ( The degree of the Torsion as a polynomial in the boundary maps)
\begin{align}\label{scaling}
\begin{split}
& \mbox{\emph{deg}}\mathbf{Tor}\left(E^{\bull}, \dl^{\bull}\right)= (-1)^{n+1}\sum_{i=0}^{n+1}(-1)^{i}i\mbox{\emph{dim}}(E^i) \ .
\end{split}
\end{align}
\end{proposition}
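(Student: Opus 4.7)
The plan is to (i) read off how each boundary map $\partial_j$ enters the torsion formula, (ii) sum up the contributions to get the total degree as an alternating sum of the ranks $\kappa_j := \dim(\partial_j E^j)$, and then (iii) use exactness (the rank--nullity identity $\kappa_{j-1}+\kappa_j = r_j$) together with the Euler identity $\sum_i (-1)^i r_i = 0$ to convert this into the desired expression in the dimensions $r_i = \dim E^i$.

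First, inspect the defining expression
\begin{align*}
\mathbf{Tor}(E^\bullet,\partial^\bullet)^{(-1)^n} = (S_0)^{-1}\otimes(\partial_0 S_0\wedge S_1)\otimes(\partial_1 S_1\wedge S_2)^{-1}\otimes\cdots\otimes (\partial_n S_n)^{(-1)^n}.
\end{align*}
Each boundary map $\partial_j$ occurs in exactly one factor, namely $(\partial_j S_j\wedge S_{j+1})^{(-1)^{j}}$. Since $S_j\in \wedge^{\kappa_j}E^j$ and applying $\partial_j$ to a wedge of $\kappa_j$ basis vectors produces a polynomial of degree $\kappa_j$ in the entries of $\partial_j$, the net degree of $\mathbf{Tor}^{(-1)^n}$ in the variable $\partial_j$ is $(-1)^{j}\kappa_j$, and the choice of $S_j$ (taken independent of the boundary maps) is irrelevant for degree counting by \textbf{D3}. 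Summing,
\begin{align*}
\deg \mathbf{Tor}(E^\bullet,\partial^\bullet) = (-1)^n\sum_{j=0}^{n}(-1)^{j}\kappa_j.
\end{align*}

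Next I would use exactness. The rank--nullity theorem combined with $\ker\partial_j=\mathrm{im}\,\partial_{j-1}$ gives $\kappa_{j-1}+\kappa_j=r_j$ for all $j$, with the boundary conventions $\kappa_{-1}=0$ and $\kappa_n=r_{n+1}$. Iterating this recursion yields $(-1)^j\kappa_j=\sum_{i=0}^{j}(-1)^i r_i$, so by switching the order of summation
\begin{align*}
\sum_{j=0}^{n}(-1)^{j}\kappa_j = \sum_{i=0}^{n}(-1)^i r_i\,(n+1-i) = \sum_{i=0}^{n+1}(-1)^i (n+1-i)\,r_i,
\end{align*}
where the $i=n+1$ term vanishes and was appended only for cosmetic purposes.

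Finally, split $(n+1-i)=(n+1)-i$; the first piece contributes $(n+1)\sum_{i=0}^{n+1}(-1)^i r_i=0$ by the acyclic Euler relation, leaving
\begin{align*}
\deg \mathbf{Tor}(E^\bullet,\partial^\bullet) = -(-1)^n\sum_{i=0}^{n+1}(-1)^i i\, r_i = (-1)^{n+1}\sum_{i=0}^{n+1}(-1)^i i\,\dim(E^i),
\end{align*}
as asserted. The only subtle point is bookkeeping of signs: tracking the exponent $(-1)^j$ attached to each $\partial_j$-factor in the torsion, and then making the single clean use of the vanishing Euler characteristic at the end. Everything else is the rank--nullity theorem applied term by term.
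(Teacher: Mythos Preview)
Your proof is correct and follows essentially the same route as the paper: both identify the degree of $\mathbf{Tor}^{(-1)^n}$ as $\sum_{j=0}^{n}(-1)^{j}\kappa_j$ (the paper does this via a scaling parameter $\mu$, you read it off directly), and both then convert this into the stated formula using the rank--nullity relation $\kappa_{j-1}+\kappa_j=r_j$. The only cosmetic difference is in the final combinatorics: the paper applies the Abel-summation identity $\sum_{j}(-1)^{j}\kappa_j=\sum_i(-1)^{i+1}i(\kappa_i+\kappa_{i-1})$ and substitutes $r_i$ directly, whereas you unroll the recursion, switch the order of summation, and then invoke the vanishing Euler characteristic $\sum_i(-1)^i r_i=0$ to kill the extra term---either manipulation yields the same result.
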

\begin{proof}
Let $\mu\in \mathbb{C}^{*}$ be a parameter. Then 
\begin{align*}
\mathbf{Tor}\left(E^{\bull},\mu \dl^{\bull}\right)^{(-1)^n}&=(S_{0})^{-1}\otimes(\mu\dl_0 S_{0}\wedge S_{1})\otimes (\mu\dl_1S_{1}\wedge S_{2})^{-1}\otimes \dots \otimes (\mu\dl_{n} S_{n})^{(-1)^{n}}\\
\ \\
&=\mu^{\kappa_0-\kappa_1+\kappa_2-\dots+(-1)^n\kappa_n}\mathbf{Tor}\left(E^{\bull}, \dl^{\bull}\right)^{(-1)^n} \ .
\end{align*}
It is clear that
\begin{align*}
\kappa_0-\kappa_1+\kappa_2-\dots+(-1)^n\kappa_n=\sum_{i=0}^{n+1}(-1)^{i+1}i(\kappa_i+\kappa_{i-1})\quad (\kappa_{n+1}=\kappa_{-1}:=0)\ .
\end{align*}
Exactness of the complex implies that we have the short exact sequence 
 \[ \begin{CD} 
0@>>>\dl_{i-1}E^{i-1}@>\iota>>E^{i}@>\dl_{i} >> \dl_iE^{i}@>>>0 \ . \end{CD}
\]
Therefore $\kappa_i+\kappa_{i-1}=\dim(E_i)$ .
\end{proof}

\section{\ \ Fourier-Mukai Transforms and The Geometric Technique}
  
In this section we prove Theorem \ref{hilb-disc}. The method of proof follows the \emph{Geometric Technique}.  The author's understanding is that the technique is due to many mathematicians (see \cite{cay}, \cite{detdiv} , \cite{kempf76} , \cite{gkz}, \cite{ksz}). The author has also learned a great deal about the method from the monograph of J. Weyman \cite{weyman}, especially the ``basic set-up'' of chapter 5.  It seems that the application of the technique to complex differential geometry is new.

Let $X$ a complex variety. Let $(\mathcal{E}^{\bull}\ ;\ \delta^{\bull})$ be an exact (bounded) complex of locally free sheaves over $X$. The discussion in the previous section implies the following
\begin{proposition}\label{canonicallytrivial} The determinant line bundle of the complex admits a canonical nowhere vanishing section $\Delta$ 
\begin{align*}
\mbox{\textbf{\emph{Det}}}(\mathcal{E}^{\bull}\ ;\ \delta^{\bull})\overset{\Delta}{\cong}\mathbb{C} \ .
\end{align*}
 \end{proposition}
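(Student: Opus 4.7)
The plan is to globalize fact \textbf{D1} from Section 3, which provides the pointwise torsion element. The main task is to check that the pointwise construction can be performed holomorphically in families of locally free sheaves, and that independence of choices (fact \textbf{D3}) is precisely what makes the local trivializations glue to a canonical global section.

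First, I would show that the truncation kernels $\mathcal{K}^i := \ker \delta^i \subset \mathcal{E}^i$ are themselves locally free. This is done by downward induction on $i$: since $\delta^n : \mathcal{E}^n \twoheadrightarrow \mathcal{E}^{n+1}$ is surjective onto a locally free sheaf, the short exact sequence
\[
0 \longrightarrow \mathcal{K}^n \longrightarrow \mathcal{E}^n \longrightarrow \mathcal{E}^{n+1} \longrightarrow 0
\]
splits locally, so $\mathcal{K}^n$ is locally free; proceeding inductively using exactness ($\mathrm{image}(\delta^{i-1}) = \mathcal{K}^i$) one obtains short exact sequences of locally free sheaves
\[
0 \longrightarrow \mathcal{K}^i \longrightarrow \mathcal{E}^i \longrightarrow \mathcal{K}^{i+1} \longrightarrow 0, \qquad 0 \leq i \leq n,
\]
with $\mathcal{K}^0 = 0$ and $\mathcal{K}^{n+1} = \mathcal{E}^{n+1}$.

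Second, taking determinants of each of these short exact sequences yields canonical isomorphisms of line bundles
\[
\mathbf{det}(\mathcal{E}^i) \cong \mathbf{det}(\mathcal{K}^i) \otimes \mathbf{det}(\mathcal{K}^{i+1}).
\]
Substituting these into the defining expression for $\mathbf{Det}(\mathcal{E}^\bullet)$ and forming the alternating tensor product, every $\mathbf{det}(\mathcal{K}^i)$ with $1 \leq i \leq n$ appears with opposite signs from the neighboring terms and cancels, while the extremes vanish because $\mathcal{K}^0 = 0$. This produces a canonical isomorphism $\mathbf{Det}(\mathcal{E}^\bullet) \cong \underline{\mathbb{C}}$, and I take $\Delta$ to be the image of $1$ under this isomorphism.

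Third, I would verify that this global trivialization agrees fiberwise with the torsion element from Section 3, which is where \textbf{D3} enters: any local choice of splittings of the above short exact sequences induces, on fibers, a choice of the auxiliary elements $S_i \in \wedge^{\kappa_i}(E^i)$ used to build $\mathbf{Tor}(E^\bullet, \partial^\bullet)$, and \textbf{D3} guarantees that the resulting element of $\mathbf{Det}(\mathcal{E}^\bullet)_x$ is independent of these choices. The main obstacle, if any, is this compatibility check --- one must confirm that the telescoping isomorphism built from the $\mathcal{K}^i$-sequences reproduces the explicit torsion formula from Section 3 up to consistent signs (accounting for the $(-1)^n$ normalization). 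Once this is done, the global section $\Delta$ is canonical and, being nonzero on every fiber by \textbf{D1}, is nowhere vanishing.
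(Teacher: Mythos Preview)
Your proposal is correct and follows the same idea the paper invokes: the paper does not actually prove this proposition but simply asserts that ``the discussion in the previous section implies'' it, i.e.\ that the pointwise torsion of \textbf{D1}/\textbf{D3} globalizes. Your argument supplies precisely the standard details the paper omits --- local freeness of the kernels by downward induction, the telescoping of determinants along the resulting short exact sequences, and the identification of the global trivialization with the fiberwise torsion via \textbf{D3} --- so there is nothing to correct.
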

   We return to the situation described in the introduction. The object of study is a smooth, linearly normal, family $\mathbb{X}\ra B$ of relative dimension $n$  together with a rank $n+1$ locally free sheaf $\mathcal{Q}$ satisfying the requirements of a basic set up:
 \begin{align*}
\xymatrix{&\mathcal{E}\ar[d]\\  
	I \ar@{^{(}->}[r]^-{\iota} \ar[d] &  \mathbb{X}\times\mathbb{P}(W) \ar@/^1pc/[u]^{ {\Psi}} \ar[d]^{\pi}&\\  
	{Z}\ar@{^{(}->}[r]\ar[d]&B\times \mathbb{P}(W)  \ar[d]\\
	B\ar[r]&B }
\end{align*} 
 
 The basic set up exchanges a high codimension family $\mathbb{X}\ra B$ for a \emph{family of divisors} $Z_b\subset \mathbb{P}(W)$ parametrized by the same base
\begin{align*}
Z_b:=Z\cap \left(\{b\}\times \mathbb{P}(W)\right)\ ,\ b\in B\ .
\end{align*}
 Recall that each $Z_b$ is irreducible and moreover that the degree of $Z_b$ in $\mathbb{P}(W)$ is given by
 \begin{align*}
 \deg(Z_b)=\int_{\mathbb{X}_b}c_n(\mathcal{Q}) \ ,
 \end{align*}
 and therefore constant in $b\in B$.

 To facilitate the study of $Z$ we pass to the birationally equivalent $I $ which is much easier to deal with. More precisely we study the direct image of the structure sheaf of $I$ viewed as a coherent sheaf on $\mathbb{X}\times\mathbb{P}(W)$.

Recall that the Koszul complex $\left({K^{\bull}\mathcal{E}} \ ;\ \dl^{\bull}:= \lrcorner\ {\mathbf{s}}\right)$ associated to $(\mathcal{E},\mathbf{s}$) 
 \begin{align*}
\overset{\lrcorner\ {\mathbf{s}}}{\ra} \Lambda^{j+1}(\mathcal{E}^{\vee})\overset{\lrcorner\ {\mathbf{s}}}{\ra}\Lambda^{j}(\mathcal{E}^{\vee})\overset{\lrcorner\ {\mathbf{s}}}{\ra} \dots \overset{\lrcorner\ {\mathbf{s}}}{\ra} \mathcal{O}_{\mathbb{X}\times\mathbb{P}(W)}   
 \end{align*}
 resolves $\iota_{*}(\mathcal{O}_{I})$.
 The main player in this paper is the following  Fourier-Mukai transform between the bounded derived categories\footnote{In this paper $D^b(X)$ is   just notation for the set of bounded complexes of coherent sheaves on $X$. }
  \begin{align*}
 \Phi^{K^{\bull}\mathcal{E}}_{\mathbb{X}\mapsto B\times \mathbb{P}(W)}
 \left(\cdot\right):D^{b}(\mathbb{X})\ra D^{b}(B\times \mathbb{P}(W))\ 
 \end{align*}
 associated to the projections
 \begin{align*}
\xymatrix{&\quad \mathbb{X}\times \mathbb{P}(W)\ar[ld]_p\ar[rd]^\pi& \\
 \mathbb{X}&&\quad B\times \mathbb{P}(W)}
 \end{align*}

 Recall that this transform is defined by the formula
 \begin{align*}
 \Phi^{K^{\bull}\mathcal{E}}_{\mathbb{X}\mapsto B\times \mathbb{P}(W)}\left( F^{\bull}\right):=\mbox{\textbf{R}}^{\bull}\pi_{*}\left(\mbox{\textbf{L}}^{\bull}p^{*}(F^{\bull})\otimes K^{\bull}\mathcal{E}\right)\ 
 \end{align*}
 
 where $\textbf{R}^{\bull}\pi_{*}$ and ${\textbf{L}}^{\bull}p^{*}$ denote the usual derived functors.
 
  We remind the reader that by Grauert's Theorem of coherence of higher direct images that the complex
   \begin{align*}
   \Phi^{K^{\bull}\mathcal{E}}_{\mathbb{X}\mapsto B\times \mathbb{P}(W)}\left( F^{\bull}\right)
   \end{align*}
   
   is represented by a bounded complex of (quasi) coherent $\mathcal{O}_{B\times \mathbb{P}(W)}$-modules with {coherent} cohomology sheaves.\\
  
 We are interested in the value of this transform at a particular point of $D^{b}(\mathbb{X})$
  \begin{align*}
   \Phi^{K^{\bull}\mathcal{E}}_{\mathbb{X}\mapsto B\times \mathbb{P}(W)}\left( \mathcal{O}_{\mathbb{X}}(m) \right)\in D^{b}\left( B\times \mathbb{P}(W)\right) \quad m>> 0\ .
   \end{align*}
  
 \ \\
  Since the (twisted) Koszul complex resolves $\iota_{*}\mathcal{O}_{I}\otimes p^{*}\mathcal{O}_{\mathbb{X}}(m) $ we see that \emph{the cohomology sheaves of the complex} $$ \Phi^{K^{\bull}\mathcal{E}}_{\mathbb{X}\mapsto B\times \mathbb{P}(W)}\left( \mathcal{O}_{\mathbb{X}}(m) \right)$$ \emph{are all supported on} $Z$.
  
 Take $m$ large enough to force the term wise vanishing of all higher direct image sheaves 
   \begin{align*}
   \mbox{\textbf{R}}^{i}\pi_{*}\left(\Lambda^{j}(\mathcal{E}^{\vee})\otimes p^{*}\mathcal{O}_{\mathbb{X}}(m) \right)=0\qquad i>0\ \ \mbox{all $j$} \ .
   \end{align*}
 
 This has the crucial implication that the natural map
 \begin{align*}
\pi_{*}\left( \Lambda ^{\bull}\mathcal{E}^{\vee}\otimes p^{*}\mathcal{O}_{\mathbb{X}}(m) \ , \ \dl^{\bull}\right)\ra \Phi^{K^{\bull}\mathcal{E}}_{\mathbb{X}\mapsto B\times \mathbb{P}(W)}\left( \mathcal{O}_{\mathbb{X}}(m) \right)
 \end{align*}
 is the identity in $D^{b}\left( B\times \mathbb{P}(W)\right)$ , in other words, the two complexes are quasi-isomorphic.  This is a consequence of the following lemma.    
\begin{lemma}\label{hypercohomology} 
 Let $\mathfrak{A}$ and $\mathfrak{B}$ be abelian categories. Assume that $\mathfrak{A}$ has enough injectives. Let $F:\mathfrak{A}\ra \mathfrak{B}$ be a left exact functor. Let $(N^{\bull}\ , \ \delta_{\bull})$ denote a bounded below complex of elements of $\mathfrak{A}$ where each term is $F$-acyclic
 \begin{align*}
 \mathbf{R}^iF(N^{j})=0  \qquad i>0\ \ \mbox{all $j$} \ .
 \end{align*}
 Then for any injective resolution 
 $$(N^{\bull}\ , \ \delta_{\bull})\overset{\iota_{\bull}}{\ra} (I^{\bull} \ , \ \dl_{\bull})$$
  the induced map of complexes
  \begin{align*}
   (F(N^{\bull})\ , \ F(\delta_{\bull}))\overset{F(\iota_{\bull})\quad }{\ra}  (F(I^{\bull}) \ , \ F(\dl_{\bull})) 
   \end{align*}
 is a quasi-isomorphism. 
 \end{lemma}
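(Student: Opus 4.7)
My plan is to use the standard mapping cone reduction, which turns the claim into a statement about applying $F$ to a bounded below acyclic complex whose terms are $F$-acyclic.

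First I would form the mapping cone $C^{\bullet}$ of the morphism of complexes $\iota_{\bullet}:N^{\bullet}\ra I^{\bullet}$. Since $\iota_{\bullet}$ is (by definition of a resolution) a quasi-isomorphism, $C^{\bullet}$ is acyclic. In degree $n$ we have $C^n=N^{n+1}\oplus I^n$, and both summands are $F$-acyclic (the $N^{j}$ by hypothesis, and the $I^{j}$ because they are injective, hence $F$-acyclic for any left exact $F$). Moreover $C^{\bullet}$ is bounded below because $N^{\bullet}$ and $I^{\bullet}$ are. Since taking mapping cones commutes with any additive functor, $F(C^{\bullet})$ is the mapping cone of $F(\iota_{\bullet})$, so proving that $F(\iota_{\bullet})$ is a quasi-isomorphism is equivalent to proving that $F(C^{\bullet})$ is acyclic.

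Thus the whole lemma reduces to the following sub-claim: \emph{if $M^{\bullet}$ is a bounded below acyclic complex of $F$-acyclic objects in $\mathfrak{A}$, then $F(M^{\bullet})$ is acyclic in $\mathfrak{B}$}. To prove this I would set $Z^n:=\ker(\delta^n)=\mathrm{im}(\delta^{n-1})$ and break $M^{\bullet}$ into short exact sequences
\begin{equation*}
0\ra Z^n\ra M^n\ra Z^{n+1}\ra 0.
\end{equation*}
Since $M^{\bullet}$ is bounded below, $Z^n=0$ for $n\ll 0$, so $Z^n$ is trivially $F$-acyclic in that range. Proceeding by induction on $n$, if $Z^n$ and $M^n$ are both $F$-acyclic then the long exact sequence of derived functors associated to the short exact sequence above forces $\mathbf{R}^iF(Z^{n+1})=0$ for $i\geq 1$, so $Z^{n+1}$ is $F$-acyclic as well.

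Once every $Z^n$ is known to be $F$-acyclic, applying the left exact functor $F$ to each short exact sequence yields a short exact sequence
\begin{equation*}
0\ra F(Z^n)\ra F(M^n)\ra F(Z^{n+1})\ra 0
\end{equation*}
in $\mathfrak{B}$. Splicing these together exactly as one does for an ordinary acyclic complex shows that $F(M^{\bullet})$ is acyclic, which by the mapping cone reduction finishes the proof. The only mildly delicate step is the inductive bookkeeping showing that each syzygy $Z^{n}$ inherits $F$-acyclicity; this is where the boundedness below is essential, since it supplies the base case for the induction. Everything else is a formal consequence of the long exact sequence of right derived functors and of the fact that additive functors commute with mapping cones.
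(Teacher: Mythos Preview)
Your argument is correct and is the standard mapping-cone reduction to the statement that $F$ preserves acyclicity of bounded-below acyclic complexes with $F$-acyclic terms; the inductive d\'ecalage on the syzygies $Z^n$ is handled properly and the bounded-below hypothesis is used exactly where it should be.

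By way of comparison, the paper does not actually prove this lemma: it simply refers the reader to Lemma~8.5 on p.~188 of Voisin's book. So your proposal is not so much a different route as the route the cited reference (and most textbooks) take; what you have written is essentially the proof behind the citation. The only cosmetic remark is that you might state explicitly that in an abelian category the kernels $Z^n$ and the factorizations $M^n\twoheadrightarrow Z^{n+1}$ exist and behave as expected, but this is routine.
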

 \begin{proof}
 See Lemma 8.5 pg.188 of \cite{voisinI} .
 \end{proof}
 \begin{remark}
 \emph{By definition, $(F(I^{\bull}) \ , \ F(\dl_{\bull}))$ is the $F$-derived image of 
 $(N^{\bull}\ , \ \delta_{\bull})$} .  
 \end{remark}
 \ \\

Lemma \ref{hypercohomology} allows us to replace the apriori quite complicated object  
 \begin{align*}
 \Phi^{K^{\bull}\mathcal{E}}_{\mathbb{X}\mapsto B\times \mathbb{P}(W)}\left( \mathcal{O}_{\mathbb{X}}(m) \right)
 \end{align*}
  with the much simpler (termwise) direct image complex 
 $$\pi_{*}\left( \Lambda ^{\bull}\mathcal{E}^{\vee}\otimes p^{*}\mathcal{O}_{\mathbb{X}}(m) \ , \ \dl^{\bull}\right)\ . $$

The purpose of the foregoing discussion was to put us in the following situation
 \begin{proposition}
  The complex of locally free sheaves over $B\times \mathbb{P}(W)$
 \begin{align*}
\pi_{*}\left( \Lambda ^{\bull}\mathcal{E}^{\vee}\otimes p^{*}\mathcal{O}_{\mathbb{X}}(m) \ , \ \dl^{\bull}\right)
  \end{align*}
 is exact away from $Z$.
 \end{proposition}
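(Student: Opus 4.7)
The plan is to derive exactness away from $Z$ by assembling three ingredients already in place in this section: regularity of the evaluation section $\Psi$, the acyclicity/quasi-isomorphism furnished by Lemma \ref{hypercohomology}, and the observation that the Fourier--Mukai transform $\Phi^{K^{\bull}\mathcal{E}}_{\mathbb{X}\mapsto B\times\mathbb{P}(W)}(\mathcal{O}_{\mathbb{X}}(m))$ has cohomology sheaves supported on $Z$. First I would note that $I=(\Psi=0)$ has codimension $n+1=\mathrm{rk}(\mathcal{E})$ in the smooth ambient $\mathbb{X}\times\mathbb{P}(W)$, as recorded in the introduction. Hence $\Psi$ is a regular section, so $(K^{\bull}\mathcal{E},\,\lrcorner\,\Psi)$ is a locally free resolution of $\iota_{*}\mathcal{O}_{I}$; tensoring with the locally free sheaf $p^{*}\mathcal{O}_{\mathbb{X}}(m)$ preserves exactness and produces a locally free resolution of $\iota_{*}\mathcal{O}_{I}\otimes p^{*}\mathcal{O}_{\mathbb{X}}(m)$.

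Next I would arrange the acyclicity hypothesis of Lemma \ref{hypercohomology}. Since $\mathcal{E}=Q\otimes\mathcal{O}_{\mathbb{P}(W)}(1)$, each Koszul term $\Lambda^{j}\mathcal{E}^{\vee}\otimes p^{*}\mathcal{O}_{\mathbb{X}}(m)$ decomposes as the external tensor product of $\Lambda^{j}Q^{\vee}\otimes\mathcal{O}_{\mathbb{X}}(m)$ on the $\mathbb{X}$-factor with $\mathcal{O}_{\mathbb{P}(W)}(-j)$ on the $\mathbb{P}(W)$-factor. Because $\pi$ is the product of $\mathbb{X}\ra B$ with the identity on $\mathbb{P}(W)$, flat base change reduces the computation of $R^{i}\pi_{*}$ to that of the projective family $\mathbb{X}\ra B$ applied to $\Lambda^{j}Q^{\vee}\otimes\mathcal{O}_{\mathbb{X}}(m)$. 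Since the Koszul range $0\leq j\leq n+1$ is finite, relative Serre vanishing supplies a single $m_{0}$ such that
\[R^{i}\pi_{*}\bigl(\Lambda^{j}\mathcal{E}^{\vee}\otimes p^{*}\mathcal{O}_{\mathbb{X}}(m)\bigr)=0 \qquad \mbox{for all } i>0,\ \mbox{all } j,\ \mbox{all } m\geq m_{0}.\]
With every term $\pi_{*}$-acyclic, Lemma \ref{hypercohomology} identifies the termwise direct image complex $\pi_{*}(\Lambda^{\bull}\mathcal{E}^{\vee}\otimes p^{*}\mathcal{O}_{\mathbb{X}}(m),\,\dl^{\bull})$ with $\Phi^{K^{\bull}\mathcal{E}}_{\mathbb{X}\mapsto B\times\mathbb{P}(W)}(\mathcal{O}_{\mathbb{X}}(m))$ inside $D^{b}(B\times\mathbb{P}(W))$.

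To conclude, the cohomology sheaves of the Fourier--Mukai transform are $\mathbf{R}^{\bull}\pi_{*}(\iota_{*}\mathcal{O}_{I}\otimes p^{*}\mathcal{O}_{\mathbb{X}}(m))$, and these are set-theoretically supported on $\pi(I)=Z$ since the sheaf being pushed forward lives on $I$. The quasi-isomorphism then forces the cohomology sheaves of $\pi_{*}(\Lambda^{\bull}\mathcal{E}^{\vee}\otimes p^{*}\mathcal{O}_{\mathbb{X}}(m),\,\dl^{\bull})$ to vanish on the open complement $(B\times\mathbb{P}(W))\setminus Z$, which is precisely the asserted exactness. The main obstacle I anticipate is achieving uniform Serre vanishing across all Koszul terms with a single choice of $m$; this is resolved cleanly by the finiteness of the Koszul range, so it is a step deserving of care rather than a genuine difficulty.
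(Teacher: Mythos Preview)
Your argument is correct and follows exactly the route the paper takes: the proposition is stated as the outcome of the preceding discussion (Koszul resolves $\iota_*\mathcal{O}_I$, cohomology of the Fourier--Mukai image is supported on $Z$, higher direct images vanish for $m\gg 0$, and Lemma \ref{hypercohomology} identifies the termwise pushforward with the derived one). Your write-up is slightly more explicit than the paper about the external tensor decomposition and the uniform relative Serre vanishing, but the logic is identical.
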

 
 Therefore away from $Z$ the determinant of the direct image complex has a nowhere vanishing section   
 \begin{align*}
\xymatrix{&{\mathbf{Det}}\ \pi_{*}\left( \Lambda ^{\bull}\mathcal{E}^{\vee}\otimes p^{*}\mathcal{O}_{\mathbb{X}}(m) \ , \ \dl^{\bull}\right)\ar[d]\\
 & B\times \mathbb{P}(W)\setminus Z \ar@/^1pc/[u]^{\Delta}} 
  \end{align*}
  
  \begin{align*}
 \Delta:=\mbox{\textbf{Tor}}\ \pi_{*}\left( \Lambda ^{\bull}\mathcal{E}^{\vee}\otimes p^{*}\mathcal{O}_{\mathbb{X}}(m) \ ,\ \dl^{\bull}\right) \ .
\end{align*}  

\begin{proposition}
There is an invertible sheaf $\mathcal{A}$ over $B$ such that 
\begin{align*}
 \mathbf{Det}\ \pi_{*}\left( \Lambda ^{\bull}\mathcal{E}^{\vee}\otimes p^{*}\mathcal{O}_{\mathbb{X}}(m) \ , \ \dl^{\bull}\right)\cong p_{1}^*\mathcal{A}\otimes p_{2}^{*}\mathcal{O}_{W}(\widehat{d})  \ .
 \end{align*}
 \end{proposition}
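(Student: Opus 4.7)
The plan is to compute the direct image complex in terms of external tensor products on the product $B \times \mathbb{P}(W)$, and then identify the two factors of its determinant by restriction to a fiber.

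Because $\pi = \pi_X \times \mathrm{id}_{\mathbb{P}(W)}$ and $\mathcal{E} \cong q_1^{*} Q \otimes q_2^{*}\mathcal{O}_{\mathbb{P}(W)}(1)$, with $q_1,q_2$ the two projections from $\mathbb{X}\times \mathbb{P}(W)$, each term of the twisted Koszul complex splits as an external product
\[
\Lambda^{j}\mathcal{E}^{\vee}\otimes p^{*}\mathcal{O}_{\mathbb{X}}(m) \;\cong\; q_1^{*}\bigl(\Lambda^{j}Q^{\vee}\otimes \mathcal{O}_{\mathbb{X}}(m)\bigr)\,\otimes\, q_2^{*}\mathcal{O}_{\mathbb{P}(W)}(-j).
\]
For $m\gg 0$ all higher direct images of $\Lambda^{j}Q^{\vee}\otimes \mathcal{O}_{\mathbb{X}}(m)$ vanish (this is the point where $m$ was chosen large), so flat base change together with the projection formula produces the clean decomposition
\[
\pi_{*}\bigl(\Lambda^{j}\mathcal{E}^{\vee}\otimes p^{*}\mathcal{O}_{\mathbb{X}}(m)\bigr) \;\cong\; p_{1}^{*}F_{j}\otimes p_{2}^{*}\mathcal{O}_{\mathbb{P}(W)}(-j), \qquad F_{j}:=(\pi_X)_{*}\bigl(\Lambda^{j}Q^{\vee}\otimes \mathcal{O}_{\mathbb{X}}(m)\bigr),
\]
with each $F_{j}$ locally free on $B$ of some rank $r_{j}$.

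Next I would take determinants term by term. Since $\det(p_{1}^{*}F_{j}\otimes p_{2}^{*}\mathcal{O}(-j))\cong p_{1}^{*}\det F_{j}\otimes p_{2}^{*}\mathcal{O}(-j\,r_{j})$, and external-product line bundles are closed under tensor products and duals, the alternating tensor product defining $\mathbf{Det}$ immediately takes the desired shape
\[
\mathbf{Det}\,\pi_{*}\bigl(\Lambda^{\bull}\mathcal{E}^{\vee}\otimes p^{*}\mathcal{O}_{\mathbb{X}}(m),\,\delta^{\bull}\bigr) \;\cong\; p_{1}^{*}\mathcal{A}\otimes p_{2}^{*}\mathcal{O}_{\mathbb{P}(W)}(N),
\]
for some invertible sheaf $\mathcal{A}$ on $B$ (a signed tensor product of the $\det F_{j}$) and some integer $N$ of the form $\sum_{j}\epsilon_{j}\, j\, r_{j}$.

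The only remaining issue is to identify $N=\widehat{d}$. I would do this by fiber restriction rather than by pushing the signed rank computation through directly. Since the $F_{j}$ are locally free, restriction of the direct image complex to $\{b\}\times\mathbb{P}(W)$ commutes with $\mathbf{Det}$, and yields the direct image under $\mathbb{X}_{b}\times\mathbb{P}(W)\to \mathbb{P}(W)$ of the Koszul resolution of $\iota_{*}\mathcal{O}_{I_{b}}$; so the restricted determinant is $\mathcal{O}_{\mathbb{P}(W)}(N)$. By the Cayley construction recalled in Section~3 (which is precisely where the basic set up of $\mathbb{X}\to B$ gets used fiberwise), the canonical torsion section of this line bundle is nowhere vanishing off $Z_{b}$ and its zero divisor on $\mathbb{P}(W)$ is exactly $Z_{b}$, with multiplicity one because $I_{b}\to Z_{b}$ is birational. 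Hence $\mathcal{O}_{\mathbb{P}(W)}(N)\cong \mathcal{O}_{\mathbb{P}(W)}(\deg Z_{b})=\mathcal{O}_{\mathbb{P}(W)}(\widehat{d})$, forcing $N=\widehat{d}$. The main obstacle is this final identification: one must check both that fiberwise restriction is compatible with the alternating determinant construction (guaranteed by local freeness of the $F_{j}$) and that the Cayley torsion defines $Z_{b}$ with its reduced structure, which is where the birationality hypothesis in the basic set up is essential.
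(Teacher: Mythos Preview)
Your external tensor decomposition of each term and the resulting identification of the determinant as $p_{1}^{*}\mathcal{A}\otimes p_{2}^{*}\mathcal{O}_{\mathbb{P}(W)}(N)$ is exactly what the paper does (the paper phrases it stalkwise, you phrase it sheaf-theoretically, but the content is identical). The genuine divergence is in the last step, identifying $N=\widehat{d}$.

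The paper computes $N=\chi=\sum_{j}(-1)^{j+1}j\,r_{j}(m)$ directly by applying Hirzebruch--Riemann--Roch to the bundles $\Lambda^{j}Q^{\vee}(m)$ on a fiber $\mathbb{X}_{b}$, obtaining $\int_{\mathbb{X}_{b}}c_{n}(Q)=\widehat{d}$; the calculation is declared ``straightforward but tedious'' and omitted. Only \emph{after} the proposition does the paper use $\chi=\widehat{d}$ to deduce, by comparing degrees, that $\mathrm{ord}_{Z_{b}}(\Delta|_{b})=1$. You run this implication in the opposite direction: you want multiplicity one from birationality of $I_{b}\to Z_{b}$, and then read off $N=\deg Z_{b}=\widehat{d}$. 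This is a legitimate alternative, but the step you flag as the ``main obstacle'' is not supplied by Section~3 alone; what you need is the Knudsen--Mumford $\mathrm{div}$ formula, which says the divisor of the torsion section equals the alternating sum of the codimension-one cycles of the cohomology sheaves. Since $I_{b}\to Z_{b}$ is birational, $R^{0}\pi_{*}\mathcal{O}_{I_{b}}(m)$ has generic length one along $Z_{b}$ and the higher $R^{i}$ are supported in codimension $\geq 2$, giving multiplicity one. With that ingredient your argument is complete; it trades the HRR computation for an appeal to the $\mathrm{div}$ formula, and has the virtue of making the role of the birationality hypothesis explicit at this stage rather than afterward.
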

\begin{proof} 
 The proof is quite easy.
 Observe that by the projection formula, the stalk of the direct image of our Koszul complex at $b\in B$ is given by
 \begin{align*}
\pi_{*}\left( \Lambda ^{j}\mathcal{E}^{\vee}\otimes p^{*}\mathcal{O}_{\mathbb{X}}(m) \right)|_{b}\cong H^{0}\left(\mathbb{X}_b\ , \ \Lambda^{j} Q(m)\right)\otimes\mathcal{O}_{W}(-j) \ .
 \end{align*}

  We define
\begin{align*}
r_{j}(m):=\dim H^{0}\left(\mathbb{X}_b\ , \ \Lambda^{j} Q(m)\right) \ .
\end{align*}

 Then we define $\mathcal{A}$ as follows
\begin{align*}
\mathcal{A}|_{b}:= \bigotimes _{0\leq j\leq n+1}\Lambda^{r_{j}(m)} H^{0}\left(\mathbb{X}_b\ , \ \Lambda^{j} Q(m)\right)^{(-1)^{j+1}}  \ .
\end{align*}
Therefore the determinant is given by
\begin{align*}
{\mathbf{Det}}\ \pi_{*}\left( \Lambda ^{\bull}\mathcal{E}^{\vee}\otimes p^{*}\mathcal{O}_{\mathbb{X}}(m) \ , \ \dl^{\bull}\right)\cong p_{1}^{*}\mathcal{A}\otimes \mathcal{O}_{W}(\chi) \ ,
\end{align*}
where we have defined $\chi\in \mathbb{Z}$ by
 \begin{align*}
 \chi:=\sum_{0\leq j\leq n+1}(-1)^{j+1}jr_{j}(m)\in \mathbb{Z} \ .
 \end{align*}
  
So the argument comes down to showing that $\chi=\widehat{d}$ 
which a straightforward but tedious calculation with the Hirzebruch-Riemann-Roch Theorem will verify.
\end{proof}
Fix $b\in B$. Let $f_b$ denote a defining polynomial of $Z_b$. Since $\Delta|_{\{b\}\times \mathbb{P}(W)}$ is without zeros or poles away from $Z_b$ there is an integer $$\mbox{ord}_{Z_b}(\Delta|_{\{b\}\times \mathbb{P}(W)})$$
satisfying
\begin{align*}
\Delta|_{\{b\}\times \mathbb{P}(W)}=f_b^{\mbox{ord}_{Z_b}(\Delta|_{\{b\}\times \mathbb{P}(W)})} \ .
\end{align*}
 Our computation of the degree of the torsion shows that  
\begin{align*} 
 \deg(\Delta|_{\{b\}\times \mathbb{P}(W)})=\chi=\widehat{d}  \ . 
\end{align*}
 Therefore $\mbox{ord}_{Z_b}(\Delta|_{\{b\}\times \mathbb{P}(W)})=1$ , and we have shown
\begin{align*}
\mathbb{C}^*\Delta|_{\{b\}\times \mathbb{P}(W)}=\mathbb{C}^*f_{b}  \ .
\end{align*}
Therefore we have the following
\begin{proposition} 
$\Delta$ vanishes on $Z$ and in particular extends to a global section of the determinant line.  
 \end{proposition}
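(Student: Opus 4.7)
The plan is to promote the nowhere-vanishing section $\Delta$ on $B\times\mathbb{P}(W)\setminus Z$ to a global holomorphic section of $L:=p_1^*\mathcal{A}\otimes p_2^*\mathcal{O}_W(\widehat{d})$ whose zero divisor is exactly $Z$. The first observation is that $\Delta$ is automatically a rational section of $L$ on all of $B\times\mathbb{P}(W)$: in any local trivializations of $L$ and of the terms $\pi_*(\Lambda^j\mathcal{E}^\vee\otimes p^*\mathcal{O}_{\mathbb{X}}(m))$, the torsion is an alternating product of determinants of matrices representing the boundary operators $\partial^\bullet$, and these determinants are polynomial in the matrix entries. Hence the meromorphic extension is built into the construction; it remains to rule out poles and to identify the zeros with $Z$. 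Write $\mathrm{div}(\Delta)=D_+-D_-$ with $D_\pm\geq 0$ both supported on $Z$.

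Ruling out $D_-$ uses the fiberwise information already extracted: for every $b\in B$ the restriction $\Delta|_{\{b\}\times\mathbb{P}(W)}$ equals $c(b)f_b$ up to a nonzero scalar, where $f_b$ is the \emph{irreducible} polynomial defining $Z_b$; in particular this restriction is holomorphic. Since $Z=\pi(I)$ is the image of the irreducible variety $I=\mathbb{P}(\mathcal{S})$, it is itself irreducible, and by the basic set-up assumption it dominates $B$ with fibers $Z_b$ of constant degree $\widehat{d}$. A nonzero $D_-$ would therefore be a positive multiple of $Z$ and would force a pole along $Z_b$ on each fiber, contradicting the holomorphicity of $c(b)f_b$. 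So $D_-=0$ and $\Delta$ extends holomorphically to all of $B\times\mathbb{P}(W)$. The same reasoning forces $D_+=kZ$ for some $k\geq 0$, and restriction to a generic fiber yields $\mathrm{div}(c(b)f_b)=Z_b$ with multiplicity one (because $f_b$ is irreducible of degree $\widehat{d}=\deg(Z_b)$), so $k=1$ and $\mathrm{div}(\Delta)=Z$. In particular $\Delta$ vanishes on $Z$, as required.

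The delicate point is the passage from a fiberwise computation of $\Delta|_b$ to a global statement about the polar divisor, since restriction of a rational section is ill-defined along its indeterminacy locus. What rescues the argument is that the fiberwise restriction has been shown to be holomorphic for \emph{every} $b\in B$ rather than only for generic $b$, and that $Z$ has no ``vertical'' components of the form $\pi^{-1}(B')$ with $B'\subsetneq B$ (which would otherwise be the only way for $\Delta$ to carry a pole compatible with the fiberwise holomorphicity). Once this is in place, the remainder is bookkeeping with the torsion formula and the degree calculation $\chi=\widehat{d}$ already carried out.
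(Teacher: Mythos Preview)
Your argument is correct and follows the same route as the paper. Both hinge on the fiberwise identity $\mathbb{C}^*\Delta|_{\{b\}\times\mathbb{P}(W)}=\mathbb{C}^*f_b$, obtained from the degree computation $\chi=\widehat{d}$; the paper simply states the proposition immediately after this, whereas you spell out the passage from the fiberwise statement to the global one by observing that $\Delta$ is a priori a rational section of $L$, that $Z=\pi(I)$ is irreducible (as the image of the irreducible projective bundle $\mathbb{P}(\mathcal{S})$ over $\mathbb{X}$), and that therefore $\mathrm{div}(\Delta)=kZ$ with $k$ determined on a generic fiber to be $1$. Your remark about the absence of vertical components is the precise point the paper leaves implicit, and it is exactly what is needed to justify reading off $\mathrm{div}(\Delta)$ from its restriction to fibers.
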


  Therefore, to any basic set up for the family $\mathbb{X}\ra B$ we may associate the following\\
 \ \\
\noindent$\bull$ An invertible sheaf $\mathcal{A}\in \mbox{Pic}(B)$ . \\
\ \\
$\bull$ An algebraic section 
 \begin{align*}
 \Delta\ \in H^{0}\left(B\times\mathbb{P}(W) \ , \ p_1^*\mathcal{A}\otimes p_2^*\mathcal{O}_{W}( \widehat{d})\right) \ .
 \end{align*}
$\bull$  A relative Cartier divisor $Z$ over $B$
 \begin{align*}
 {Z}:= \mbox{Div}(\Delta)\subset B\times \mathbb{P}(W) \ . \\
 \end{align*}
 $\bull$ Moreover, the direct image $T$ of $\Delta$ \\
\begin{align*}
 \xymatrix{   \mathcal{A}\otimes  H^{0}(\mathbb{P}(W) , \mathcal{O}_{W}( \widehat{d}) )  \ar[d] \\
B\ar@/^1pc/[u]^{T:= {p_1}_*(\Delta)}}
\end{align*}
 {never vanishes on $B$.}
 
\ \\

This package induces a morphism (also denoted by $\Delta$) from $B$ to the complete linear system on $\mathbb{P}(W)$ 
\begin{align*}
\Delta:B\ra |\mathcal{O}_{W}( \widehat{d})| 
\end{align*}
 as follows . 
Since $T\neq 0$ , $T$ gives an injection
\begin{align*}
0\ra\mathcal{O}_B\xrightarrow{\times T} \mathcal{A}\otimes H^{0}(\mathbb{P}(W),\mathcal{O}( \widehat{d})) \ .
\end{align*}
Dualizing and tensoring with $\mathcal{A}$ gives a surjection  
\begin{align*}
H^{0}(\mathbb{P}(W),\mathcal{O}( \widehat{d}))^{\vee}\times B \ra \mathcal{A}\ra 0 \ ,
\end{align*}
hence $\mathcal{A}$ is globally generated.

Therefore we obtain a morphism as required. This completes the proof of Theorem \ref{hilb-disc}. Moreover we see that
\begin{align*}
\Delta^*\mathcal{O}_{|\mathcal{O}( \widehat{d})|}(1)\cong \mathcal{A} \ ,
\end{align*}
  and the natural map
  \begin{align*}
   H^{0}(\mathbb{P}(W),\mathcal{O}( \widehat{d}))^{\vee}\ra H^0(B, \mathcal{A}) \\
   \end{align*}
is an injection. The map exhibits a large (generating) finite dimensional subspace of the space of sections of $\mathcal{A}$ over $B$.  
Conversely, given such a map $\Delta$, we define 
 \begin{align*}
 &\mathcal{A}:=\Delta^*\mathcal{O}_{|\mathcal{O}( \widehat{d} )|}(1)  \\
 \ \\
 &\Delta(b,[w]):=\Delta(b)([w])\in\mathcal{A}_b\otimes \mathcal{O}_{W}( \widehat{d}) \ . 
 \end{align*}
 Next we give several examples of basic set ups for a given family $\mathbb{X}\ra B$.
  \subsection{The basic set up for Resultants} 
 Let $\mathbb{X}\rightarrow  B$ be a flat family of polarized subvarieties of $\mathbb{P}^N$.  We can arrange a basic set up 
\begin{align*}
\xymatrix{&\mathcal{E}\ar[d]\ar[r]& p_1^*\mathcal{O}_{\cpn}(1)\otimes p_2^*\mathcal{Q}\ar[d] \\
 I \ar@{^{(}->}[r]^-{\iota} \ar[d] &  \mathbb{X}\times\mathbb{G} \ar@/^1pc/[u]^{\Psi}\ar[r]^-{\pi_2\times 1}\ar[d]^{\pi}&\cpn\times \mathbb{G}  \ar@/^1pc/[u]^{\Pi}\\
 {Z}\ar@{^{(}->}[r]&B\times \mathbb{G}  \ar[d]^{p_1}&\\
 &B& }
\end{align*}
for this family if we define
 \begin{align*}
&\bull \ \Pi|_{([v],L)}:\mathbb{C}v\ra \mathbb{C}^{N+1}/L \ , \ \quad \Pi(zv)=\pi_{L}(zv) \\
& \quad \pi_L:\mathbb{C}^{N+1}\ra \mathbb{C}^{N+1}/L \ \mbox{denotes the projection} . \\
&\quad \Psi:=(\pi_2\times 1)^*\Pi\ . \\
 \ \\
&\bull\   \mathbb{G}:=\mathbb{G}(N-n-1,N) \ .\\
\ \\
&\bull \ \mathcal{E}:=(\pi_2\times 1)^*\left(p_1^*\mathcal{O}_{\cpn}(1)\otimes p_2^*\mathcal{Q}\right) \\
&\ \ \quad I = (\Psi=0)  \ . \\
 \ \\
& \bull\ \pi:\mathbb{X}\times \mathbb{G}\ra B\times \mathbb{G} \quad \mbox{is defined by\footnote{Abuse of notation.}} \ \pi(x,L):=(\pi(x),L)\\
& \quad  {Z}:=\pi( I)\ . \\
\ \\
&\bull\ Z_b \ \mbox{is the \emph{\textbf{Cayley form}} of $X_b$ and $\Delta_b$ is the $X_b$-\emph{\textbf{resultant}}}\ .
\end{align*}
 
\subsection{The basic set up for Discriminants}
In this section we consider a flat family $\mathbb{X}\xrightarrow{\pi} B$ of polarized manifolds.  
The basic set up that we consider in this case has the form
\begin{align*}
\xymatrix{&\mathcal{E}\ar[d]\ar[r]& p_1^*\mathscr{U}^{\vee}\otimes p_2^*\mathcal{O}_{\check{\mathbb{P}}^N}(1)\ar[d] \\
 \Gamma_{\mathbb{X}}\ar@{^{(}->}[r]^-{\iota} \ar[d] &  \mathbb{X}\times\check{\mathbb{P}}^N \ar@/^1pc/[u]^{\mathbf{s}}\ar[r]^-{\rho\times 1}\ar[d]^{p}&\mathbb{G}(n,N)\times \check{\mathbb{P}}^N  \ar@/^1pc/[u]^{\Lambda}\\
 {Z}\ar@{^{(}->}[r]&B\times \check{\mathbb{P}}^N  \ar[d]^{p_1}&\\
 &B& }
\end{align*}
In the diagram above we have defined
\begin{align*}
&\bull \ \Lambda|_{(L,[f])}:L\ra \mathbb{C}^{N+1}/\ker(f) \ , \ \quad \Lambda(u)=\pi_{\ker(f)}(u) \\
& \quad \pi_{\ker(f)}:\mathbb{C}^{N+1}\ra \mathbb{C}^{N+1}/{\ker(f)} \ \mbox{denotes the projection} . \\
&\quad \mbox{Observe that}\  \Lambda|_{(L,[f])}=0 \ \mbox{if and only if}\ L\subset \ker(f)\ . \\
 \ \\
&\bull \ \rho=\rho_{\mathbb{X}}:\mathbb{X} \ra \mathbb{G}(n,N)   \ \mbox{is the fiber wise Gauss map.} \\ 
&\quad \mathscr{U} \ \mbox{is the tautological bundle}\ .\\
\ \\
&\bull \ \mathcal{E}:=(\rho\times 1)^*\left( p_1^*\mathscr{U}^{\vee}\otimes p_2^*\mathcal{O}_{\check{\mathbb{P}}^N}(1)     \right) \\
&\ \ \quad \Gamma_{\mathbb{X}}:= (\mathbf{s}=0) \ ;  \   {Z}:=q( \Gamma_{\mathbb{X}})\ .\\
 \ \\
& \bull Z_b\ \mbox{is the \emph{\textbf{dual variety}} of $X_b$ and $\Delta_b$ is the $X_b$-\emph{\textbf{discriminant}}} \ .
 \end{align*}
 \ \\
 \begin{remark}
 \emph{An interesting generalisation of these two examples is constructed as follows. Given a family $\mathbb{X}\ra B$ let $\mathscr{E}_{k}$ denote the rank $n-k+1$ trivial bundle over $\mathbb{X}$. Then we obtain a new family}
 \begin{align*}
 \mathbb{P}(\mathscr{E}_{k})\ra B \ .
 \end{align*}
\emph{The fiberwise Segre embedding exhibits this family as a family of subvarieties of the projective space of matrices of size $(N+1)\times (n-k+1)$.
If the original family is smooth one may apply the set up for discriminants to this new family. When $k=1$ the corresponding polynomial
$\Delta_b$ is the $X_b$-\emph{\textbf{hyperdiscriminant}}.}
\end{remark}
\section{\ \ Comparing the currents $\delta_Z$ and $\delta_{I}$ over $S$}

Now we prove Corollary \ref{metrics}. To begin, let $S\ra B$ be a morphism from a \emph{smooth} \footnote{Smoothness is required to apply the Poincar\'e Lelong formula.} variety $S$.
As stated in the introduction we assume 
\begin{align*}
\mathcal{A}\cong \mathcal{O}_{S}
\end{align*}
and there exists a smooth map  
 \begin{align*}
h:S\ra \mathcal{H}^{+}(W, <\cdot\ ,\ \cdot>)
\end{align*}
satisfying conditions $A_1$ and $A_2$ .
 There is an induced Hermitean metric on $h$ on the determinant line bundle  $$p_1^*\mathcal{A}\otimes p_2^*\mathcal{O}_{W}( \widehat{d})$$ and it is not hard to see that the square of the length of our section $\Delta$ is given by
\begin{align*}
\frac{|\Delta( s)([w])|^2_{h^{\widehat{d}}}}{||\Delta (s)||_2^2}
\end{align*}
The denominator being the usual $L^2$ norm
\begin{align*}
||\Delta (s)||_2^2:=\int_{\mathbb{P}(W)}|\Delta(s)(\cdot)|^2_{h^{\widehat{d}}}\ \om_{FS}^{l}\quad ;\ l+1=\dim(W)\ .
\end{align*}

The Poincar\'{e} Lelong formula gives
\begin{align*}
\frac{\sqrt{-1}}{2\pi}\dl\dlb\log\frac{|\Delta(s)( [w])|^2_{h^{\widehat{d}}}}{||\Delta (s) ||_2^2}=\delta_{Z}-c_{1}\left( \mathcal{A}\otimes  \mathcal{O}_{W}( \widehat{d})\ ; \ h\right) \ .
\end{align*}
   Recall that the triviality of $\mathcal{A}$ over $S$ is equivalent to having a lift of \footnote{We will also denote the lifted map by $\Delta$ as well.} the map $\Delta$  to the affine cone
\begin{align*}
\Delta:S\ra  H^{0}(\mathbb{P}(W),\mathcal{O}( \widehat{d})) \setminus \{0\} \ .
\end{align*}
Next fix some base point $o\in S$. Then
\begin{align*}
c_{1}\left( \mathcal{A}\otimes  \mathcal{O}_{W}( \widehat{d})\ ; \ h\right)|_{S\times\mathbb{P}(W)}=\widehat{d}\om_{FS}+\frac{\sqrt{-1}}{{2\pi}}\dl\dlb\log\frac{||\Delta(s)||_2^2}{||\Delta(o)||_2^2}
\end{align*}
With this said we have the following proposition concerning the direct image of this current under $\pi$.
\begin{proposition}\label{poincare-lelong}
Let $\eta$ be a smooth compactly supported form on $S$
\begin{align*}
\eta\in C^{\infty}\Lambda_0 ^{\dim(S)-1 ,\dim(S)-1}(S)
\end{align*}
Then  
\begin{align*}
\int_{Z}\pi^{*}(\eta)\wedge\om^l_{FS}=\int_{S}\eta\wedge \frac{\sqrt{-1}}{{2\pi}}\dl\dlb\log\left(e^{\theta(s)} \frac{||\Delta(s)||_2^2}{||\Delta( o)||_2^2}\right)
\end{align*}
where $\theta$ is defined by
\begin{align*}
\theta(s):= \int_{\mathbb{P}(W)}\log\frac{|\Delta( s)( [w])|^2_{h^{\widehat{d}}}}{||\Delta (s)||_2^2}\om^l_{FS}\ .
\end{align*}
\end{proposition}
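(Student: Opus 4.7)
The plan is to apply the Poincar\'e--Lelong formula to the algebraic section $\Delta$ of $p_1^*\mathcal{A}\otimes p_2^*\mathcal{O}_W(\widehat{d})$ over $S\times\mathbb{P}(W)$, test the resulting equation of currents against $\pi^*\eta\wedge\om_{FS}^l$, and reduce by Stokes and Fubini to an identity of currents on $S$. Substituting the formula for the Chern curvature displayed immediately before the proposition into Poincar\'e--Lelong gives
\begin{align*}
\frac{\sqrt{-1}}{2\pi}\dl\dlb\log\frac{|\Delta(s)([w])|^2_{h^{\widehat{d}}}}{\|\Delta(s)\|_2^2}=\delta_Z-\widehat{d}\,\om_{FS}-\frac{\sqrt{-1}}{2\pi}\dl\dlb\log\frac{\|\Delta(s)\|_2^2}{\|\Delta(o)\|_2^2}
\end{align*}
as currents on $S\times\mathbb{P}(W)$.

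Next I would wedge both sides with $\pi^*\eta\wedge\om_{FS}^l$ and integrate over $S\times\mathbb{P}(W)$. On the left, the integrand $L(s,w):=\log(|\Delta(s)([w])|^2_{h^{\widehat{d}}}/\|\Delta(s)\|_2^2)$ is locally $L^1$, so by the distributional definition of $\dl\dlb$ (and compact support of $\eta$)
\begin{align*}
\int_{S\times\mathbb{P}(W)}\frac{\sqrt{-1}}{2\pi}\dl\dlb L\wedge\pi^*\eta\wedge\om_{FS}^l=\int_{S\times\mathbb{P}(W)} L\cdot\pi^*\!\left(\tfrac{\sqrt{-1}}{2\pi}\dl\dlb\eta\right)\wedge\om_{FS}^l,
\end{align*}
where I have used that $\om_{FS}^l$ is pulled back from $\mathbb{P}(W)$ and $\pi^*\eta$ depends only on $s$, so all mixed $ds\,dw$ terms vanish and $\dl\dlb$ passes through the wedge. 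Fubini on the $\mathbb{P}(W)$-factor then yields $\int_S\theta(s)\cdot\tfrac{\sqrt{-1}}{2\pi}\dl\dlb\eta$, which by the distributional definition of $\dl\dlb\theta$ on $S$ is exactly $\int_S\eta\wedge\tfrac{\sqrt{-1}}{2\pi}\dl\dlb\theta(s)$.

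On the right, $\delta_Z\wedge\pi^*\eta\wedge\om_{FS}^l$ integrates to $\int_Z\pi^*\eta\wedge\om_{FS}^l$, the term $\widehat{d}\,\om_{FS}\wedge\om_{FS}^l$ vanishes because $\om_{FS}^{l+1}=0$ on $\mathbb{P}(W)$, and the last term has integrand depending only on $s$, so Fubini separates out the normalised volume $\int_{\mathbb{P}(W)}\om_{FS}^l=1$ and produces $\int_S\eta\wedge\tfrac{\sqrt{-1}}{2\pi}\dl\dlb\log(\|\Delta(s)\|_2^2/\|\Delta(o)\|_2^2)$. Combining and pulling the last term back inside the logarithm yields precisely the claimed identity.

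The main obstacle is the one subtle point of rigour: the inner integral $\theta(s)$ is not smooth, merely continuous, so the last step requires interpreting $\dl\dlb\theta$ in the sense of currents on $S$. This is exactly what Tian's H\"older continuity of the Mahler measure (the proposition quoted in the introduction) guarantees: $\theta$ is a bona fide continuous function on $S$, hence defines a current whose $\dl\dlb$ is well-defined, and the Fubini exchange $\int_{\mathbb{P}(W)}\dl\dlb_s L\wedge\om_{FS}^l=\dl\dlb\theta$ holds in the distributional sense. Once this is in hand the rest is bookkeeping with Stokes' theorem on $S\times\mathbb{P}(W)$.
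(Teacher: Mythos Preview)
Your argument is correct and follows exactly the approach the paper intends: the paper displays the Poincar\'e--Lelong equation and the curvature formula immediately before the proposition and then states the result ``concerning the direct image of this current under $\pi$,'' leaving the push-forward computation implicit. You have simply written out that push-forward in full, including the Fubini step that produces $\theta(s)$ and the vanishing of $\om_{FS}^{l+1}$; your attention to the distributional interpretation of $\dl\dlb\theta$ via Tian's H\"older estimate is a legitimate point the paper does not spell out.
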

 
 The birationality of  $I $ and $Z$ imply that we have the identity 
\begin{align*}
\int_{Z}\pi^{*}(\eta)\wedge\om^l_{FS}=\int_{I }\pi^*(\eta)\wedge\om^l_{FS} 
\end{align*}
 for all compactly supported forms $\eta$ on $S$. Property A1 of the current $\widehat{\mathbf{e}}(H_0^{\mathcal{E}})$ implies that
 \begin{align*}
 \int_{Z}\pi^{*}(\eta)\wedge\om^l_{FS}=
\int_{\mathbb{X}\times\mathbb{P}(W)}\frac{\sqrt{-1}}{2\pi}\dl\dlb{\Psi}^{*}\widehat{\mathbf{e}}(H_0^{\mathcal{E}})\wedge\om^l\wedge \pi^{*}(\eta) \ .
 \end{align*}
 
Property A2 and the variation formula show that 
 \begin{align*}
 \int_{\mathbb{X}_{s}\times\mathbb{P}(W)}{\Psi}^{*}\widehat{\mathbf{e}}(H_{0}^{\mathcal{E}})\wedge\om^l=\int_{\mathbb{X}_{s}\times\mathbb{P}(W)}\textbf{e}(H^{\mathcal{E}}_{0};{H}^{\mathcal{E}}_{S})\wedge \om^l+\mbox{a pluriharmonic function of $s$} \ .
 \end{align*}
   
Therefore we see that for all compactly supported forms $\eta$ we have
 \begin{align*}
\int_{S}  \eta\wedge \frac{\sqrt{-1}}{2\pi}\dl\dlb\int_{\mathbb{X}_s\times\mathbb{P}(W)}  Ò{\mathbf{e}}(H^{\mathcal{E}}_{0};{H}^{\mathcal{E}}_{S})\wedge \om^l =\int_{S}\eta\wedge \frac{\sqrt{-1}}{{2\pi}}\dl\dlb\log\left(e^{\theta(s)} \frac{||\Delta(s)||_2^2}{||\Delta( o )||_2^2}\right)\ .
\end{align*}
Therefore we have proved Corollary \ref{metrics} .

Given a fixed projective variety $X\subset \cpn$ with a non-degenerate dual variety $\check{X}$ we construct, following \cite{tian97}, a ``tautological family'' $\mathbb{X}\ra G$ where $G=\slnc$ with fiber $\sigma X$. Then, for each $0\leq k\leq \dim(X)$ , we consider the new family $\mathbb{P}(\mathscr{E}_k)\ra G$. Let $\mathcal{Q}$ denote the vector bundle associated to the set up for discriminants for this family. Then for the obvious choice of Hermitean inner product on $W$, there is a natural map 
\begin{align*}
h :G\ra \mathcal{H}^+(W , <\cdot ,\cdot>) \ 
\end{align*}
satisfying A1 and A2 with basepoint $o=e$, the corresponding action(s) $E_{\om}$ \footnote{This is forthcoming work of the first author and his student Q. Westrich.} are related to the higher K-energy maps of Mabuchi.

\bibliographystyle{plain} 
\bibliography{ref.bib}

\begin{thebibliography}{10}

\bibitem{bgs2007}
Gilllet~H. Bismut, J.M. and Soul\'e C.
\newblock Complex immersions and arakelov geometry.
\newblock {\em Grothendieck Festschrift}, 76(1):250--326, 2007.

\bibitem{bgs1}
J.-M. Bismut, H.~Gillet, and C.~Soul{\'e}.
\newblock Analytic torsion and holomorphic determinant bundles. {I}.
  {B}ott-{C}hern forms and analytic torsion.
\newblock {\em Comm. Math. Phys.}, 115(1):49--78, 1988.

\bibitem{cay}
Arthur Cayley.
\newblock On the theory of elimination.
\newblock {\em Cambridge and Dublin Math Journal}, 3, 1848.

\bibitem{voisinI}
Voisin Claire.
\newblock {\em Hodge Theory and Complex Algebraic Geometry, I}, volume~76 of
  {\em Cambridge Studies in Advanced Mathematics}.
\newblock Cambridge University Press, Cambridge, UK, 2002.
\newblock Translated by Leila Schneps.

\bibitem{dingtian}
Wei~Yue Ding and Gang Tian.
\newblock K\"ahler-{E}instein metrics and the generalized {F}utaki invariant.
\newblock {\em Invent. Math.}, 110(2):315--335, 1992.

\bibitem{gkz}
I.~M. Gelfand, M.~M. Kapranov, and A.~V. Zelevinsky.
\newblock {\em Discriminants, resultants, and multidimensional determinants}.
\newblock Mathematics: Theory \& Applications. Birkh\"auser Boston Inc.,
  Boston, MA, 1994.

\bibitem{ksz}
M.~M. Kapranov, B.~Sturmfels, and A.~V. Zelevinsky.
\newblock Chow polytopes and general resultants.
\newblock {\em Duke Math. J.}, 67(1):189--218, 1992.

\bibitem{kempf76}
George~R. Kempf.
\newblock On the collapsing of homogeneous bundles.
\newblock {\em Invent. Math.}, 37(3):229--239, 1976.

\bibitem{detdiv}
Finn~Faye Knudsen and David Mumford.
\newblock The projectivity of the moduli space of stable curves {I}:
  {P}reliminaries on ``det'' and ``{D}iv''.
\newblock {\em Math. Scand.}, 39(1):19--55, 1976.

\bibitem{git}
D.~Mumford, J.~Fogarty, and F.~Kirwan.
\newblock {\em Geometric invariant theory}, volume~34 of {\em Ergebnisse der
  Mathematik und ihrer Grenzgebiete (2) [Results in Mathematics and Related
  Areas (2)]}.
\newblock Springer-Verlag, Berlin, third edition, 1994.

\bibitem{paul2012}
Sean~Timothy Paul.
\newblock Hyperdiscriminant polytopes, {C}how polytopes, and {M}abuchi energy
  asymptotics.
\newblock {\em Annals of Math.}, (175), 2012.

\bibitem{kenhyp}
Gang Tian.
\newblock The {$K$}-energy on hypersurfaces and stability.
\newblock {\em Comm. Anal. Geom.}, 2(2):239--265, 1994.

\bibitem{tian97}
Gang Tian.
\newblock K\"ahler-{E}instein metrics with positive scalar curvature.
\newblock {\em Invent. Math.}, 130(1):1--37, 1997.

\bibitem{bottchrnfrms}
Gang Tian.
\newblock Bott-{C}hern forms and geometric stability.
\newblock {\em Discrete Contin. Dynam. Systems}, 6(1):211--220, 2000.

\bibitem{weyman}
Jerzy Weyman.
\newblock {\em Cohomology of vector bundles and syzygies}, volume 149 of {\em
  Cambridge Tracts in Mathematics}.
\newblock Cambridge University Press, Cambridge, 2003.

\end{thebibliography}
\end{document}